\documentclass[11pt]{amsart}
\usepackage{amsmath,amssymb,amsfonts,amsthm,amscd,indentfirst}


\numberwithin{equation}{section}

\newtheorem{prop}{Proposition}[section]
\newtheorem{theo}[prop]{Theorem}
\newtheorem{lemm}[prop]{Lemma}
\newtheorem{coro}[prop]{Corollary}
\newtheorem{rema}[prop]{Remark}

\def\begeq{\begin{equation}}
\def\endeq{\end{equation}}

\def\and{\quad{\rm and}\quad}

\def\<{\langle}
\def\>{\rangle}

\def\P{\partial}
\def\O{\Omega}

\def\di{\displaystyle}
\def\Dint{\displaystyle\int}

\begin{document}
\title[Curvature flows and inequalities]
{A mean curvature type flow in space forms}
\author{Pengfei Guan and Junfang Li}
\address{Department of Mathematics\\
         McGill University\\
         Montreal, Quebec. H3A 2K6, Canada.}
\email{guan@math.mcgill.ca}

\address{Department of Mathematics\\
         University of Alabama at Birmingham\\
         Birmingham, AL 35294}
	 \email{jfli@uab.edu}
\date{}
\thanks{Research of the first author was supported in part by NSERC Discovery Grant. Research of the second author was supported in part by NSF DMS-1007223.}

\begin{abstract}
  In this article, we introduce a new type of mean curvature flow (\ref{umcf general}) for bounded star-shaped domains in space forms and prove its longtime existence, exponential convergence without any curvature assumption. Along this flow, the enclosed volume is a constant and the surface area evolves monotonically. Moreover, for a bounded convex domain in $\mathbb R^{n+1}$, the quermassintegrals evolve monotonically along the flow which allows us to prove a class of Alexandrov-Fenchel inequalities of quermassintegrals. 
\end{abstract}
\subjclass{53C23, 35J60, 53C42}
\maketitle

\section{Introduction}
There have been extensive interests on flows of hypersurfaces evolving by functions of their mean curvature in the past $30$ years. Brakke \cite{brakke} used geometric measure theory to study surfaces driven by their mean curvature. The more differential geometric approach was given by Huisken \cite{H} who studied the mean curvature flow,
\begin{equation}\label{mcf}
    \frac{\partial X}{\partial t}=-H\nu,
\end{equation}
where $X$, $\nu$  and $H$ are the position function, the outward unit normal vector and the mean curvature of the hypersurface respectively.

Huisken in \cite{H} proved that flow (\ref{mcf}) is a contracting flow which contracts convex hypersurfaces into a round point. In contrast, another model type of flow, the inverse mean curvature flow, is an expanding flow introduced by Gerhardt \cite{G}, Urbas \cite{U}, which expands star-shaped mean convex hypersurfaces out to a round sphere.  Note that, in general, flow (\ref{mcf}) would develop singularities when the initial data is a star-shaped mean convex hypersurface.

 Hypersurface flows can be used to prove geometric inequalities, old and new. For example, Gage and Hamilton \cite{gage} used curve shortening flow to prove isoperimetric inequality for convex planar domains. Andrews \cite{Andrews0} used affine mean curvature flow to prove affine isoperimetric inequality.   Topping \cite{topping} studied generalized isoperimetric inequalities for convex domains on Riemann surfaces using Grayson's curve shortening flow. Various geometric inequalities were proved in \cite{GW1, GW2} using fully nonlinear conformal flows. Schulze \cite{Sch} used a non-linear mean curvature type flow to prove the isoperimetric inequality in $\mathbb R^{n+1}$ with $n\le 7$. The authors used the inverse mean curvature type of flows to prove Alexandrov-Fenchel quermassintegral inequalities for starshaped $k$-convex domains in \cite{GL}.

In this paper, we introduce a new type of mean curvature flow in space forms. For example, in Euclidean space $\mathbb R^{n+1}$ it reads as
\begin{equation}
    \frac{\partial X}{\partial t}=(n-Hu)\nu.
    \label{umcf}
  \end{equation}
where $X$ is the position vector, $H$ is the mean curvature, and $u= <X, \nu>$ the support function of the hypersurface.
In general space forms, the flow equation is given by
\begin{equation}
  \P_t X=(n\phi'(\rho)-Hu)\nu,
  \label{umcf general}
\end{equation}
while more details of the definition of this flow in general space forms can be found in section \ref{sec flow}, c.f. (\ref{uMCFh}). We will show this flow evolves star-shaped domains ({\it without any curvature assumption}) in space forms into spheres. The key of discovering this flow is the Minkowski identity which also plays crucial role in the proof. We note that the assumption of starshapedness (i.e., $u>0$) is necessary for flow (\ref{umcf general}) to be parabolic.

One of the main theorems of this article is the following.
\begin{theo}\label{main thm}
Let $M_0$ be a smooth compact, star-shaped hypersurface with respect to the origin in the space forms $N^{n+1}(K)$ with $n\ge2$. Then the evolution equation (\ref{umcf general}) has a smooth solution for $t\in [0,\infty)$. Moreover, the surfaces converges exponentially to a sphere as $t\rightarrow \infty$ in the $C^\infty$ topoloty.
\end{theo}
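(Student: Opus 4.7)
The plan is to reduce (\ref{umcf general}) to a scalar parabolic PDE for the radial function of the star-shaped hypersurface, establish a priori $C^k$ estimates built around the Minkowski identity, and then obtain exponential convergence to a sphere by analyzing the linearization at the round sphere. Since $M_0$ is star-shaped about the origin, each $M_t$ can be written as a radial graph over $\mathbb S^n$ with radial function $\rho(\cdot,t)$, and (\ref{umcf general}) becomes a quasilinear parabolic equation whose parabolicity condition is precisely $u>0$.

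The key structural input is the first Minkowski identity $\int_{M_t}(n\phi'(\rho) - Hu)\,dA = 0$, which gives volume preservation, and, combined with the first variation formula, $\frac{d}{dt}|M_t| = \int_{M_t} H(n\phi' - Hu)\,dA$; a second Minkowski identity together with a Cauchy--Schwarz argument should yield $\frac{d}{dt}|M_t|\le 0$ with equality only on geodesic spheres. For the $C^0$ bound I would apply the maximum principle directly to $\rho$: at a point where $\rho$ attains its maximum, the outward normal is radial so $u=\phi(\rho)$ there, and comparison with the geodesic sphere of radius $\rho$ forces $H\ge n\phi'(\rho)/\phi(\rho)$; hence the speed $n\phi'-Hu\le 0$ at that point and $\rho_{\max}$ is non-increasing, with the symmetric statement for $\rho_{\min}$. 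The $C^1$ estimate, equivalent to the preservation of star-shapedness, should come from the evolution equation for $u/\phi(\rho)$ (or its reciprocal), applying the maximum principle with the lower-order terms absorbed by the $C^0$ bound.

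The main obstacle is the $C^2$ estimate, since no convexity or mean convexity hypothesis is assumed and one must bound the principal curvatures uniformly without any pinching. My strategy is to compute the evolution equation of an auxiliary function built from the largest principal curvature (or from $H$ weighted by an appropriate power of $u$), and to exploit the fact that the coefficient of $H$ in the speed is $u>0$, so that the linearized operator is uniformly elliptic on the region where $u$ is bounded below. The Minkowski identity enters crucially because it forces the zeroth-order term $n\phi'$ and the mean curvature term $Hu$ to balance in integral, thereby providing the oscillation control on the driving speed $F=n\phi'-Hu$ needed to close the maximum principle argument. Once $|A|$ is uniformly controlled, the equation is uniformly parabolic, and Krylov--Safonov together with Schauder bootstrapping yields $C^{k,\alpha}$ bounds for all $k$, hence longtime existence on $[0,\infty)$.

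For convergence, $|M_t|$ is non-increasing and bounded below by the isoperimetric inequality, so it admits a limit; combined with smooth subsequential compactness from the $C^{k,\alpha}$ bounds, any limit $M_\infty$ must satisfy $Hu\equiv n\phi'$. Rigidity of this equation among star-shaped hypersurfaces (an Alexandrov-type argument using the Minkowski identity) forces $M_\infty$ to be the geodesic sphere with the prescribed enclosed volume. To upgrade to exponential convergence in $C^\infty$, I would linearize (\ref{umcf general}) at this sphere: volume preservation restricts admissible perturbations to mean zero, and on this constraint the linearized operator has a strictly negative spectral gap, giving exponential decay in $L^2$; interpolation with the uniform $C^{k,\alpha}$ bounds then promotes this to exponential decay in every $C^k$ norm.
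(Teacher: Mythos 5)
Your overall framework — reduce to a radial-graph PDE, run $C^0$ and $C^1$ maximum-principle estimates with the Minkowski identity as structural input, bootstrap to higher regularity, then deduce convergence — matches the paper's skeleton. The $C^0$ argument you sketch is essentially what the paper does (Proposition 4.1), and your $C^1$ idea of controlling $u/\phi(\rho)$ is equivalent to the paper's control of $\omega = \sqrt{1 + |\nabla\gamma|^2}$, since $u/\phi = 1/\omega$ for a radial graph. However, there is a genuine gap at the step you yourself flag as ``the main obstacle,'' and the gap is precisely that you miss the central technical observation of the paper.

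You propose to prove a $C^2$ (curvature) estimate by applying the maximum principle to $|A|^2$ or to the largest principal curvature, invoking ``the fact that the coefficient of $H$ in the speed is $u > 0$'' and a vague appeal to ``oscillation control'' from the Minkowski identity. This will not close. The Minkowski identity is an integral identity; it offers nothing pointwise at the interior maximum of a curvature quantity, and without a convexity or pinching hypothesis there is no sign to exploit in the zeroth-order terms of the evolution of $|A|^2$. Indeed, the paper explicitly contrasts its flow with Huisken's normalized and volume-preserving flows (\ref{nmcf}), (\ref{vmcf}), noting that those flows require ``more involved curvature estimates'' which are only available under convexity, and that longtime existence for them is unclear for non-convex initial data. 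The reason (\ref{umcf general}) escapes this problem is not a maximum-principle curvature bound; it is that once $\rho$ is written as a radial graph and re-parameterized through $\gamma$ with $d\gamma/d\rho = 1/\phi$, the scalar equation takes \emph{divergence form} (Lemma \ref{divergent}):
\[
\gamma_t = \mathrm{div}\Bigl(\frac{1}{\phi\omega}\nabla\gamma\Bigr) + \frac{2\phi'|\nabla\gamma|^2}{\phi\omega}.
\]
Once you have the $C^0$ and $C^1$ bounds, the leading coefficients are uniformly elliptic and bounded, and the classical De~Giorgi--Nash--Moser / Ladyzenskaya--Solonnikov--Ural'tseva theory for divergence-form quasilinear parabolic equations gives $C^{1,\alpha}$ directly, after which Schauder bootstrapping gives all higher regularity. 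No curvature estimate is performed or needed. Your plan replaces this one-line observation with an unspecified and almost certainly unworkable curvature estimate, so as written it does not constitute a proof.

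Two further points. First, you do not address the hyperbolic case, which is genuinely harder: the paper's gradient estimate in $\mathbb{H}^{n+1}$ requires an auxiliary uniform upper bound on $H$ (Corollary 3.3, via Newton--MacLaurin and the maximum principle applied to the evolution of $H$), because the coefficient $\frac34(\phi')^2 - \phi''\phi$ that drives the decay in the Euclidean and spherical cases can change sign when $\phi = \sinh\rho$. Second, for the exponential rate: the paper gets it essentially for free in the $K=0,+1$ cases by observing that the evolution inequality for $|\nabla\gamma|^2$ already has a strictly negative coefficient, and in the hyperbolic case it combines the monotonicity of area (which forces the principal curvatures to equalize for large $t$) with the gradient-estimate computation. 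Your proposed route via a spectral gap of the linearization at the limiting sphere is a legitimate alternative in spirit, but it requires additional care — in particular, identifying the limiting sphere (ruling out drift among nearby spheres, which introduces a kernel of the linearized operator from translations in the Euclidean case) and showing the solution enters a neighborhood where the linearized analysis applies. The paper's approach avoids all of this by working in coordinates tied to the fixed origin, so that $|\nabla\gamma| \to 0$ exponentially directly pins down the limit.
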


Another nice feature of flow (\ref{umcf general}) is that along the flow, volume of the enclosed domain is constant and the surface area is monotonically decreasing. This property yields that the flow of the hypersurface $M_t$ converges to a solution of the isoperimetric problem in the space forms.

If the initial hypersurface encloses a (weakly) convex bounded domain in Euclidean space $\mathbb R^{n+1}$, then by arguments in Bian-Guan \cite{BG}, the convexity is preserved along the flow (\ref{umcf}) and the domain becomes strictly convex instantly. Moreover, we will show all the quermassintegrals
\[
\Dint_{M}\sigma_k(\kappa)d\mu_g
\]
are non-increasing along the flow, where the quermassintegral is surface area, total mean curvature, total scalar curvature, etc. repectively for different $k=0,1,2,\cdots, n-1$. This yields a proof to the following well-known Alexandrov-Fenchel inequalities of quermassintegrals from convex geometry.

\begin{coro}\label{cor 1}
   Suppose $\Omega\subset \mathbb R^{n+1}$ is a bounded convex domain with smooth boundary $\partial \Omega$. Then the following inequality holds for any $0\le k<n-1$,
  \begin{equation}
    V(\Omega)^{\frac{1}{n+1}}\le c_{n,k}\Big(\Dint_{\P\Omega}\sigma_k(\kappa)d\mu\Big)^\frac{1}{n-k},
    \label{}
  \end{equation}
  where $c_{n,k}$ is a constant depending only on $n$ and $k$, and $\sigma_k(\kappa)$ is the $k$th elementary symmetric function of the principal curvatures. The ``$=$'' is attained if and only if $\O$ is a ball.\\
\end{coro}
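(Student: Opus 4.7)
The plan is to extract Corollary \ref{cor 1} directly from Theorem \ref{main thm} together with the two structural properties of flow (\ref{umcf}) asserted in the paragraph just before the corollary: constancy of the enclosed volume and monotone decrease of the quermassintegrals. The argument is essentially a comparison of $Q_k(\Omega) := \int_{\partial\Omega}\sigma_k(\kappa)\,d\mu$ against its asymptotic value along the flow on a round sphere of the same enclosed volume.

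First, I would run the flow (\ref{umcf}) starting from $M_0 = \partial\Omega$. By the Bian--Guan constant rank argument invoked in the paper, convexity is preserved along the flow (and becomes strict instantly), so every $M_t$ is a smooth convex hypersurface and the curvature quantity $\sigma_k(\kappa)$ remains non-negative. By Theorem \ref{main thm} the flow exists for all $t\in[0,\infty)$ and converges exponentially in $C^\infty$ to a round sphere $\partial B_R$. Because the enclosed volume is preserved, $R$ is determined by $V(B_R)=V(\Omega)$, i.e.\ $R=(V(\Omega)/|B_1|)^{1/(n+1)}$.

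Next I would use the assumed monotonicity of $Q_k$ along the flow and the $C^\infty$ convergence to pass to the limit:
\[
Q_k(\Omega)\;=\;Q_k(0)\;\geq\;\lim_{t\to\infty}Q_k(t)\;=\;Q_k(\partial B_R)\;=\;\binom{n}{k}|S^n|\,R^{n-k}.
\]
Substituting the explicit $R$ and taking the $(n-k)$th root rearranges immediately to the stated inequality with an explicit constant $c_{n,k}$ depending only on $n$ and $k$. For the rigidity statement, if equality holds then $Q_k(t)$ is constant on $[0,\infty)$, so $\tfrac{d}{dt}Q_k\equiv 0$; the evolution identity for $Q_k$ along (\ref{umcf}) (derived from the flow equation together with the Minkowski identity underlying the monotonicity) expresses this derivative as the integral of a non-negative quantity whose pointwise vanishing forces total umbilicity of $M_0$, so $\Omega$ is a ball. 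The converse direction is the direct computation used above.

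The main obstacle is not in the corollary itself — essentially all the substantive work sits inside Theorem \ref{main thm} and inside the as-yet-unproved assertions that $V(\Omega_t)$ is constant and that $Q_k(t)$ is non-increasing, both of which we are entitled to assume here. The only delicate new point is the equality case: one needs the monotonicity formula in a sharp enough form (e.g.\ a Newton--Maclaurin-type bound on the integrand of $\tfrac{d}{dt}Q_k$) so that its vanishing identifies umbilicity rather than just a weaker global condition. With that in hand, Corollary \ref{cor 1} is an immediate consequence of Theorem \ref{main thm}.
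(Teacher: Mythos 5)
Your proposal matches the paper's own (very terse) proof, which likewise deduces the corollary from Theorem \ref{main thm}, the volume-preservation and quermassintegral monotonicity (Propositions \ref{prop mono} and \ref{prop mono quermass}), and the convexity-preservation result (Theorem \ref{thmw2-flow-1}); your limiting argument $Q_k(\Omega)\ge\lim_{t\to\infty}Q_k(t)=Q_k(\partial B_R)$ with $R$ fixed by volume is exactly the intended mechanism. Your worry about the equality case is answered by the paper's proof of Proposition \ref{prop mono quermass}, where the sole inequality used is Newton--MacLaurin $\sigma_1\sigma_{k+1}\ge c_1\sigma_{k+2}$, whose equality case is pointwise umbilicity, so constancy of $Q_k$ forces each $M_t$ (and hence $M_0$) to be a round sphere.
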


When $k=0$, we have used the convention $\sigma_0\equiv 1$. Note that in this case, the inequality is reduced to the classical isoperimetric inequality. In fact, in this case the convexity condition can be replaced by star-shapedness. \\

\begin{rema}
  In our second paper \cite{GL1}, we will present a fully-nonlinear generalization of the mean curvature type of flow which can be used to prove other types of quermassintegral inequalities in Euclidean space.\\
\end{rema}

In appearance, flow (\ref{umcf}) resembles to the normalized mean curvature flow of Huisken \cite{H},
\begin{equation}
    \frac{\partial X}{\partial t}=(c(t)u-H)\nu,
	\label{nmcf}
\end{equation}
where $c(t)=\frac{1}{n}\frac{\int_MH^2d\mu}{\int_Md\mu}$ for mean curvature $H$ and also the volume preserving mean curvature flow of Huisken \cite{H4},

\begin{equation}
    \frac{\partial X}{\partial t}=(c(t)-H)\nu,
	\label{vmcf}
\end{equation}
where $c(t)=\frac{\int_MHd\mu}{\int_Md\mu}$.
However, there are several differences among these equations. The longtime existence of solutions of (\ref{nmcf}) or (\ref{vmcf}) is not clear if the initial surface is not convex, while (\ref{umcf}) requires no curvature assumption except starshapedness. Note that normalized mean curvautre flow (\ref{nmcf}) preserves the surface area while the volume preserving mean curvature flow (\ref{vmcf}) and our new flow (\ref{umcf}) preserves the volume and evolve the surface area monotonically. Moreover, (\ref{umcf}) evovles all the quermassintegrals monotonically.

On the other hand, the presence of normalization factor $c(t)$ makes (\ref{nmcf}) or (\ref{vmcf}) an integral-differential equation and a priori estimates are not easy to obtain. Thus, for the quasilinear equation (\ref{nmcf}) or (\ref{vmcf}), one needs to do the more involved curvature estimates rather than just gradient estimate. In contrast, for flow (\ref{umcf}), the normalization function is a fixed constant $n$ thanks to the Minkowski identity, and the principal part of the resulting parabolic equation is of divergent form, which makes the a priori estimates easier by the classical theory of parabolic equations in divergent form (e.g., \cite{Lady}).

We note there is also a volume preserving mean curvature flow defined in hyperbolic space, see \cite{CM}. \\

When $n=1$, Theorem \ref{main thm} holds for curves as well. We will leave the details for the readers. The rest of this paper is organized as follows. In Section \ref{sec hypersurfaces}, we give the preliminaries for hypersurface theory in space forms and prove the important Minkowski type identities. In Section \ref{sec flow}, we introduce the flow equation in space forms, derive the evolution equations for various geometric quantities, and prove the monotonicity formulas. In Section \ref{sec C0}, we convert the problem to an initial value problem on $\mathbb S^n$ and prove the $C^0$ estimate. In Section \ref{sec estimates}, we establish the gradient estimate and prove the exponential convergence and the main theorems. In the last section, we show the convexity is preserved along the flow in Euclidean space and prove Corollary \ref{cor 1}.

\section{Hypersurfaces in space forms}\label{sec hypersurfaces}

Let $N^{n+1}(K)$ be a space form of sectional curvature $K=-1, 0,$ or $+1$ and $n\ge 2$. Let $g^N:=ds^2$ denote the Riemannian metric of $N^{n+1}(K)$. Then it is known that the space forms can be viewed as Euclidean space $\mathbb R^{n+1}$ equipped with a metric tensor, i.e., $N^{n+1}(K)=(\mathbb R^{n+1},ds^2)$ with proper choice $ds^2$. More specifically, let $\mathbb S^n$ be the unit sphere in Euclidean space $\mathbb R^{n+1}$ with standard induced metric $dz^2$, then

\begin{equation}
  g^N:= ds^2=d\rho^2+\phi^2(\rho)dz^2,
  \label{}
\end{equation}
where $N^{n+1}$ is the Euclidean space $\mathbb R^{n+1}$ if $\phi(\rho)=\rho$, $\rho\in [0,\infty)$; $N^{n+1}$ is the elliptic space $\mathbb S^{n+1}$ with constant sectional curvature $+1$ if $\phi(\rho)=\sin(\rho)$, $\rho\in [0,\pi)$; and $N^{n+1}$ is the hyperbolic space $\mathbb H^{n+1}$ with constant sectional curvature $-1$ if $\phi(\rho)=\sinh(\rho)$, $\rho\in [0,\infty)$. \\

We define
\begin{equation}
  \Phi(\rho)=\Dint^\rho_0\phi(r)dr.
  \label{Phi}
\end{equation}
Then $\Phi(\rho)$ is $\frac{\rho^2}{2}$, $\cosh\rho$, $-\cos\rho$ respectively. Consider the vector field $V=\phi(\rho)\frac{\P}{\P \rho}$ on $N^{n+1}(K)$. We first show that $V$ is a conformal killing field. The following lemma holds for general warped product manifolds, see for example, \cite{P}. For convenience, we include a proof here.

\begin{lemm}
  \label{lemm conformal}
  The vector field $V$ satisfies $D_iV_j=\phi'(\rho)g^N_{ij}$, where $D$ is the covariant derivative with respect to the metric $g^N$.
\end{lemm}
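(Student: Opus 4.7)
The plan is to reduce the statement to a Hessian computation for a single function and then use the warped-product structure directly. I would begin by working in local coordinates $(x^0,x^1,\dots,x^n) = (\rho,z^1,\dots,z^n)$, where $z^a$ are local coordinates on $\mathbb{S}^n$ with round metric $\tilde g_{ab}$. In these coordinates the components of $g^N$ are $g^N_{00}=1$, $g^N_{0a}=0$, $g^N_{ab}=\phi^2(\rho)\tilde g_{ab}$.

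The key observation is that, by the definition (\ref{Phi}), $\partial_\rho\Phi=\phi$, so the covector $V_j$ has components $V_0=\phi$, $V_a=0$, which equals $\partial_j\Phi$. Hence $V=\nabla\Phi$ (with indices raised by $g^N$), and the claim $D_iV_j=\phi'(\rho)g^N_{ij}$ is equivalent to the Hessian identity $D_iD_j\Phi=\phi'(\rho)g^N_{ij}$. This reformulation both makes the symmetry $D_iV_j=D_jV_i$ manifest and reduces everything to a single second-derivative computation, avoiding the need to transport $V$ itself.

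Next I would record the Christoffel symbols of the warped product: $\Gamma^0_{ab}=-\phi\phi'\tilde g_{ab}$, $\Gamma^b_{0a}=(\phi'/\phi)\delta^b_a$, while $\Gamma^0_{00}=\Gamma^0_{0a}=\Gamma^a_{00}=0$, and $\Gamma^c_{ab}$ reduce to the Christoffel symbols of the round metric on $\mathbb{S}^n$ (which do not contribute because $\partial_c\Phi=0$). A direct check then verifies each block: the $(0,0)$-entry gives $\partial_\rho^2\Phi=\phi'=\phi' g^N_{00}$; the mixed entries $D_0D_a\Phi$ vanish, matching $\phi' g^N_{0a}=0$; and the angular block gives $D_aD_b\Phi=-\Gamma^0_{ab}\,\partial_0\Phi=\phi\phi'\tilde g_{ab}\cdot\phi/\phi=\phi'g^N_{ab}$, after using $\partial_a\partial_b\Phi=0$.

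There is no genuine obstacle here beyond keeping track of the warped-product Christoffel symbols; the entire content of the lemma is the identity $\mathrm{Hess}(\Phi)=\phi'(\rho)g^N$, which follows transparently from the fact that $\Phi$ depends only on the warping coordinate. One could alternatively cite the standard warped-product formulas (as in O'Neill or \cite{P}), but the three-line direct computation seems preferable for self-containment.
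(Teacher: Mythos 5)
Your proof is correct in substance and takes a route genuinely different from the paper's. The paper computes the Lie derivative $\mathcal{L}_V g^N = 2\phi'\,g^N$ piece by piece (on $d\rho^2$ and on $\phi^2 dz^2$) and then invokes $\tfrac{1}{2}\mathcal{L}_V g^N = DV$; that last step is valid precisely because, as you observe, $V=\nabla\Phi$ is a gradient, so $DV$ is already symmetric and equals its symmetrization. Your version makes the gradient structure the starting point rather than an implicit ingredient: you reformulate the lemma as $\mathrm{Hess}\,\Phi=\phi'\,g^N$ and then do a direct block computation with the warped-product Christoffel symbols. The two proofs carry the same content; the Lie-derivative argument is shorter and coordinate-free, while your Christoffel computation is more elementary and makes the role of the warping function transparent. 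One small slip to fix: in the angular block the intermediate expression should read $-\Gamma^0_{ab}\,\partial_0\Phi = \bigl(\phi\phi'\tilde g_{ab}\bigr)\cdot\phi = \phi^2\phi'\tilde g_{ab}$, not $\phi\phi'\tilde g_{ab}\cdot\phi/\phi$; the stated conclusion $\phi'g^N_{ab}=\phi'\phi^2\tilde g_{ab}$ is of course correct, so this is just a typo in the displayed chain.
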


\begin{proof}
 We first note that the Lie derivative of $d\rho$ is
 \[
 \mathcal L_{V} d\rho= \mathcal L_{\phi(\rho)\frac{\P}{\P\rho}} d\rho = \phi'(\rho)d\rho.
 \]
Thus
\[
\mathcal L_{V}d\rho^2= 2\phi'(\rho)d\rho^2.
\]

Note
\[
\mathcal L_{V} \phi^2(\rho)dz^2= V(\phi^2)dz^2 = 2\phi^2(\rho)\phi'(\rho)dz^2.
\]
We have
\[
\mathcal L_{V} ds^2= 2\phi'(\rho) ds^2.
\]

The proof is now complete since $\frac{1}{2}\mathcal L_{V} g^N = DV$.

\end{proof}

Let $M^n\subset N^{n+1}$ be a closed hypersurface with induced metric $g$.
\begin{lemm}
  \label{lemm hessian}
  Let $M^n\subset N^{n+1}$ be a closed hypersurface with induced metric $g$. Let $\Phi$ and $V$ be defined as in (\ref{Phi}). Then $\Phi|_{M}$ satisfies,
  \begin{equation}
    \nabla_{i}\nabla_j\Phi = \phi'(\rho) g_{ij}-h_{ij}\<V,\nu\>,
    \label{lemm hessian Phi}
  \end{equation}
  where $\nabla$ is the covariant derivative with respect to $g$, $\nu$ is the outward unit normal and $h_{ij}$ is the second fundamental form of the hypersurface.  Moreover, the Laplacian of $\Phi$ satisfies,
  \begin{equation}
    \Delta\Phi = n\phi'(\rho) -H\<V,\nu\>,
    \label{lemm mean}
  \end{equation}
  where $H$ is the mean curvature.
\end{lemm}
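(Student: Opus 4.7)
The plan is to reduce the statement to the ambient Hessian computation already done in Lemma \ref{lemm conformal} and then apply the Gauss formula to pass from the ambient covariant derivative on $N^{n+1}(K)$ to the induced one on $M$.

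First I would observe that, on $N^{n+1}(K)$, the function $\Phi(\rho)$ defined in (\ref{Phi}) has ambient gradient exactly $V$: indeed $\Phi'(\rho)=\phi(\rho)$, so $D\Phi=\phi(\rho)\frac{\partial}{\partial\rho}=V$. Combining this with Lemma \ref{lemm conformal}, the ambient Hessian is
\begin{equation*}
 D_i D_j \Phi = D_i V_j = \phi'(\rho)\, g^N_{ij}.
\end{equation*}
So the identity I want on $M$ amounts to comparing $D^2\Phi$ with the induced Hessian of $\Phi|_M$.

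Next I would invoke the Gauss formula. For tangent vectors $X,Y$ to $M$, one has $D_X Y = \nabla_X Y + h(X,Y)\nu$, where $\nabla$ is the induced connection and $h$ the scalar second fundamental form with respect to the outward normal $\nu$. Applying this to any smooth function $f$ on $N$ restricted to $M$ gives the standard relation
\begin{equation*}
 \nabla_i\nabla_j (f|_M) = D_i D_j f - h_{ij}\,\langle Df,\nu\rangle,
\end{equation*}
because $X(Y f) - (\nabla_X Y)f = D^2 f(X,Y) + (D_X Y - \nabla_X Y)f = D^2 f(X,Y) + h(X,Y)\langle Df,\nu\rangle$, and the left side equals $\nabla^2(f|_M)(X,Y) + (\nabla_X Y)f$. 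Applying this with $f=\Phi$ and using the ambient Hessian computed above yields
\begin{equation*}
 \nabla_i\nabla_j \Phi = \phi'(\rho)\, g_{ij} - h_{ij}\,\langle V,\nu\rangle,
\end{equation*}
which is (\ref{lemm hessian Phi}). Tracing with $g^{ij}$ and using $g^{ij}h_{ij}=H$ gives (\ref{lemm mean}).

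There is no real obstacle; the only point that requires care is the sign convention in the Gauss formula, which must be chosen consistently with the outward unit normal $\nu$ so that the second fundamental form of a round sphere is positive. Given that convention, the formula above is immediate, and the rest is a one-line trace.
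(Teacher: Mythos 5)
Your argument is correct and matches the paper's proof essentially step for step: both compute the ambient Hessian $D^2\Phi=\phi'g^N$ via Lemma \ref{lemm conformal}, then subtract the normal component $h_{ij}\langle V,\nu\rangle$ using the Gauss formula to obtain the induced Hessian, and trace. Your write-up simply makes the Gauss-formula step more explicit than the paper's one-line version $\nabla_i\nabla_j\Phi = D^2\Phi(e_i,e_j)-h_{ij}\langle V,\nu\rangle$.
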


\begin{proof}
  Let $e_1,\cdots, e_n$ be a basis of the tangent space of $M$.
  \begin{equation}
    \begin{array}[]{rll}
    \nabla_{i}\nabla_j\Phi =& D^2\Phi(e_i,e_j)-h_{ij}\<V,\nu\>\\
    =&\<D_{e_i}V,e_j\> -h_{ij}\<V,\nu\>\\
    =& \phi'g_{ij}-h_{ij}\<V,\nu\>,
    \end{array}
    \label{}
  \end{equation}
  where the last step we have used Lemma \ref{lemm conformal}.
\end{proof}

Note that in Euclidean space the vector field $V = \rho\frac{\P}{\P\rho}$ is equivalent to the position vector field $X$. Thus $\<V,\nu\>= \<X,\nu\>$ is just the Euclidean support function of the hypersurface $M$. In this sense, let us still call the inner product
\[
u:=\<V, \nu\>
\]
to be the support function of a hypersurface in $N^{n+1}$. An immediate corollary of Lemma \ref{lemm hessian} is the following Minkowski identity in $N^{n+1}$.

\begin{prop}\label{prop Minkowski}
  Let $M$ be a closed hypersurface in $N$. Then
  \begin{equation}
  \Dint_{M}Hu=n\Dint_{M}\phi'(\rho).
    \label{Minkowski}
  \end{equation}
\end{prop}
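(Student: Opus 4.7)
The plan is to observe that the Minkowski identity in Proposition \ref{prop Minkowski} is an immediate consequence of Lemma \ref{lemm hessian} together with the fact that the integral of a Laplacian over a closed manifold vanishes. So the strategy is essentially a one-line integration, and the real work has already been done in establishing the pointwise identity $\Delta \Phi = n\phi'(\rho) - Hu$ on $M$.

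Concretely, first I would take $\Phi$ as defined in (\ref{Phi}) and restrict it to $M$. Since $M$ is a smooth closed hypersurface in $N^{n+1}(K)$ and $\Phi$ is smooth on $N^{n+1}(K)$, the restriction $\Phi|_M$ is a smooth function on a closed Riemannian manifold $(M,g)$. By Stokes' theorem (or the divergence theorem applied to $\nabla \Phi$ on the closed manifold $M$),
\[
\int_M \Delta \Phi \, d\mu_g = 0.
\]
Next, I would invoke equation (\ref{lemm mean}) of Lemma \ref{lemm hessian}, which gives the pointwise identity
\[
\Delta \Phi = n\phi'(\rho) - H \langle V, \nu \rangle = n\phi'(\rho) - Hu,
\]
using the definition $u = \langle V, \nu\rangle$. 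Substituting this into the vanishing integral above yields
\[
0 = \int_M \bigl(n\phi'(\rho) - Hu\bigr)\, d\mu_g,
\]
which on rearrangement is exactly (\ref{Minkowski}).

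There is essentially no obstacle once Lemma \ref{lemm hessian} is available, since closedness of $M$ and smoothness of $\Phi$ ensure that the divergence theorem applies without boundary terms. The only mild point worth noting is that one should check $\Phi$ is globally smooth on the relevant domain of $\rho$ — in the spherical case $\phi(\rho) = \sin \rho$ one needs $\rho \in [0,\pi)$, but for a bounded star-shaped hypersurface this is automatic. Thus the proof reduces to citing (\ref{lemm mean}) and integrating.
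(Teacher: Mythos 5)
Your proof is correct and follows exactly the paper's argument: apply the divergence theorem on the closed manifold $M$ to the identity $\Delta\Phi = n\phi'(\rho)-Hu$ from (\ref{lemm mean}) and rearrange. The only difference is that you spell out the smoothness and closedness justifications that the paper leaves implicit.
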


\begin{proof}
  Applying divergence theorem to (\ref{lemm mean}), we obtain
  \[
  0=\Dint_{M}\Delta \Phi= n\Dint_{M}\phi'(\rho)-\Dint_{M}Hu.
  \]
  This finishes the proof.
\end{proof}

The eigenvalues of the Weingarten tensor $h^i_{j}:= g^{ik}h_{kj}$ are the principal curvatures $\kappa=(\kappa_1,\cdots,\kappa_n)$. If we denote the $k$th elementary symmetric functions of an $n$-vector by $\sigma_k$, then $\sigma_1(\kappa)$ is the mean curvature (un-normalized), $\sigma_2(\kappa)$ is the scalar curvature, etc. For convenience, we also use $\sigma_k(A_{ij})$ to denote the $k$-th elementary function of the eigenvalues of a symmetric matrix $A_{ij}$. Recall in \cite{R1,R2}, Reilly proved a formula for the $r$-th Newton operator $T_r(h)$ of the second fundamental form in general Riemannanian manifolds, see Proposition 1 in \cite{R2}. In space form, since second fundamental form is Codazzi, Reilly's formula is equivalent to say the Newton operator $T_r$ is divergent free. More specifically, in local coordinates, the Newton operator $T_r$ of $h^i_j$ is a symmetric matrix $(T_r)_{ij}=\frac{\P\sigma_{r+1}}{\P h^i_j}=:\sigma_{r+1}^{ij}$, and
\[
\nabla_i\left( T_r \right)_{ij}=0.
\]

Using this property, we can generalize (\ref{lemm mean}) to the $k$th Weingarten curvature $\sigma_k(\kappa)$.

\begin{lemm}
  Let $M^n\subset N^{n+1}$ be a closed hypersurface with induced metric $g$. Let $\Phi$ and $V$ be defined as in (\ref{Phi}). Then $\Phi|_{M}$ satisfies,
  \begin{equation}
    \nabla_{i}(\left( T_k \right)_{ij}\nabla_j\Phi) = (n-k)\phi'\sigma_k(\kappa) -(k+1)\sigma_{k+1}(\kappa)\<V,\nu\>.
    \label{k-Laplacian}
  \end{equation}

\end{lemm}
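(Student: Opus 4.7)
The plan is to expand the divergence using the product rule, then use two ingredients in sequence: (i) the Reilly-type fact that in a space form the $k$th Newton tensor $T_k$ is divergence-free, so the term in which the derivative lands on $T_k$ drops out; and (ii) the Hessian formula (\ref{lemm hessian Phi}) from Lemma \ref{lemm hessian}, which lets us trade $\nabla_i\nabla_j \Phi$ for an expression linear in $g_{ij}$ and $h_{ij}$. After those substitutions the problem collapses to two standard trace identities for Newton tensors of the Weingarten map.

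Concretely, I would first write
\[
\nabla_i\bigl((T_k)_{ij}\nabla_j\Phi\bigr) \;=\; \bigl(\nabla_i (T_k)_{ij}\bigr)\nabla_j\Phi \;+\; (T_k)_{ij}\,\nabla_i\nabla_j\Phi.
\]
The first term on the right vanishes by the divergence-free property $\nabla_i (T_k)_{ij}=0$ recalled just before the lemma (this is where the ambient space being a space form enters, via Codazzi). For the second term I plug in (\ref{lemm hessian Phi}) to get
\[
(T_k)_{ij}\,\nabla_i\nabla_j\Phi \;=\; \phi'(\rho)\,(T_k)_{ij}\,g^{ij} \;-\; \langle V,\nu\rangle\,(T_k)_{ij}\,h^{ij}.
\]

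At this point the proof reduces to the two algebraic identities
\[
(T_k)_{ij}\,g^{ij} \;=\; (n-k)\,\sigma_k(\kappa), \qquad (T_k)_{ij}\,h^{ij} \;=\; (k+1)\,\sigma_{k+1}(\kappa),
\]
which are standard consequences of the definition $(T_k)^i_{\ j}=\partial\sigma_{k+1}/\partial h^i_{\ j}$ together with Euler's homogeneity relation for elementary symmetric polynomials (or, equivalently, the recursion $\sigma_{k+1}^{ij}h_{ij} = (k+1)\sigma_{k+1}$ and $\sigma_{k+1}^{ij}\delta_{ij} = (n-k)\sigma_k$). Substituting these yields exactly (\ref{k-Laplacian}).

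The only substantive step is invoking the divergence-free property of $T_k$ in a space form, which really is the Codazzi identity $\nabla_i h_{jk}=\nabla_j h_{ik}$ combined with an induction on $k$ (or Reilly's formula \cite{R2} as cited). Everything else is linear algebra plus Lemma \ref{lemm hessian}. I do not expect any obstacle beyond being careful with the placement of indices when moving between $(T_k)_{ij}$ and the mixed form $(T_k)^i_{\ j}$ used to define $\sigma_{k+1}^{ij}$.
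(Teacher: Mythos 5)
Your proof is correct and is essentially the same as the paper's: contract the Hessian identity (\ref{lemm hessian Phi}) with $(T_k)_{ij}$, invoke the divergence-free property of the Newton tensor to identify the left side with $\nabla_i((T_k)_{ij}\nabla_j\Phi)$, and finish with the two standard trace identities $(T_k)_{ij}g_{ij}=(n-k)\sigma_k$ and $(T_k)_{ij}h_{ij}=(k+1)\sigma_{k+1}$. The only cosmetic difference is that you write the divergence expansion out explicitly and drop the $\nabla_i(T_k)_{ij}$ term, whereas the paper runs the same step in the other direction.
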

\begin{proof}
  We multiply (\ref{lemm hessian Phi} ) with $(T_k)_{ij}$, then sum over $i,j$. By divergent free property of $T_{k}$, we have the left hand side is $\nabla_{i}(\left( T_k \right)_{ij}\nabla_j\Phi)$. The following formulas are well-known, see e.g. \cite{H3},
  \[
  \begin{array}[]{rll}
   (T_k)_{ij} g_{ij}=&(n-k)\sigma_k\\
    (T_k)_{ij} h_{ij}= &(k+1) \sigma_{k+1}.
  \end{array}
  \]
  Put these identities together, we complete the proof.
\end{proof}

Prove as in Proposition \ref{prop Minkowski}, we have the following generalized Minkowski identities.

\begin{prop}\label{prop general Minkowski}
  Let $M$ be a closed hypersurface in $N$. Then, for $k=0,1,\cdots, n-1$,
  \begin{equation}
 (k+1)   \Dint_{M}\sigma_{k+1}(\kappa)u=(n-k)\Dint_{M}\phi'(\rho)\sigma_k(\kappa),
    \label{general Minkowski}
  \end{equation}
  where we use the convention that $\sigma_0\equiv 1$.
\end{prop}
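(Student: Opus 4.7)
The plan is to mirror the proof of Proposition \ref{prop Minkowski} exactly, but starting from the divergence identity (\ref{k-Laplacian}) instead of (\ref{lemm mean}). The idea is that all the hard work has already been done: the preceding lemma repackaged the Hessian formula (\ref{lemm hessian Phi}) into a perfect divergence on $M$, and the Minkowski-type identity at level $k$ is just the statement that this divergence integrates to zero on a closed hypersurface.

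Concretely, I would begin by recalling that $M$ is closed, and that the vector field $W^i := (T_k)^{ij}\nabla_j \Phi$ is a smooth tangent vector field on $M$. Integrating (\ref{k-Laplacian}) against the induced volume form $d\mu_g$ and applying the divergence theorem on $M$ (with empty boundary) gives
\begin{equation*}
0 = \int_M \nabla_i W^i \, d\mu_g = (n-k)\int_M \phi'(\rho)\,\sigma_k(\kappa)\, d\mu_g - (k+1)\int_M \sigma_{k+1}(\kappa)\,\langle V,\nu\rangle\, d\mu_g.
\end{equation*}
Recalling the convention $u = \langle V,\nu\rangle$ and rearranging yields exactly the desired identity (\ref{general Minkowski}). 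The case $k=0$ with $\sigma_0\equiv 1$ recovers Proposition \ref{prop Minkowski}, which is consistent since $(T_0)_{ij} = g_{ij}$ reduces (\ref{k-Laplacian}) to (\ref{lemm mean}).

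There is essentially no obstacle beyond bookkeeping: the divergence-free property of the Newton tensor $T_k$ in space forms (which is what makes (\ref{k-Laplacian}) hold in divergence form) has already been invoked in establishing the previous lemma, and the compactness of $M$ makes the boundary term vanish. The only thing worth double-checking is the normalization of $(T_k)_{ij} = \sigma_{k+1}^{ij}$ so that the combinatorial factors $(n-k)$ and $(k+1)$ arising from the traces $(T_k)_{ij} g_{ij} = (n-k)\sigma_k$ and $(T_k)_{ij} h_{ij} = (k+1)\sigma_{k+1}$ appear correctly, but these are exactly the identities already cited from \cite{H3}.
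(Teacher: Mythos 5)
Your proposal is correct and follows exactly the route the paper intends: the paper states Proposition \ref{prop general Minkowski} with only the remark ``Prove as in Proposition \ref{prop Minkowski},'' i.e., integrate the divergence identity (\ref{k-Laplacian}) over the closed hypersurface $M$ and apply the divergence theorem, which is precisely what you do.
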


Next, we derive the gradient and hessian of the support function $u$ under the induced metric $g$ on $M$. 

\begin{lemm}\label{hessian of u : lemm}
The support function $u$ satisfies
  \begin{equation}
    \begin{array}[]{rll}
      \nabla_i u =& g^{kl}h_{ik}\nabla_l\Phi,\\
      \nabla_i\nabla_j u = &g^{kl} \nabla_{k}h_{ij}\nabla_l \Phi+ \phi'h_{ij}-(h^2)_{ij}u,
    \end{array}
    \label{hessian of u}
  \end{equation}
  where $(h^2)_{ij}:=g^{kl}h_{ik}h_{jl}$.
\end{lemm}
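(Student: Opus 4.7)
\medskip

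The plan is to verify both identities by direct tensor calculation, leveraging Lemma \ref{lemm conformal} (which gives the conformal Killing property of $V$) and Lemma \ref{lemm hessian} (which gives the Hessian of $\Phi$ on $M$). The one preliminary fact I will use throughout is that the restriction of $\Phi$ to $M$ has tangential gradient equal to the tangential part of $V$: since $D\Phi=V$ in $N$, for any tangent frame $\{e_i\}$ on $M$ one has $\nabla_l\Phi=e_l(\Phi)=\langle V,e_l\rangle$. In particular $g^{kl}\nabla_l\Phi\,e_k=V-u\nu$.

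For the first formula, I would differentiate $u=\langle V,\nu\rangle$ along a tangent direction $e_i$ and split
\[
\nabla_i u = \langle D_{e_i}V,\nu\rangle + \langle V,D_{e_i}\nu\rangle.
\]
By Lemma \ref{lemm conformal}, $D_{e_i}V=\phi'(\rho)e_i$ is tangent to $M$, so the first term vanishes. By the Weingarten relation $D_{e_i}\nu=h_i^{\;k}e_k=g^{kl}h_{ik}e_l$ the second term becomes $g^{kl}h_{ik}\langle V,e_l\rangle=g^{kl}h_{ik}\nabla_l\Phi$, which is exactly the claim.

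For the Hessian, I would simply take a covariant derivative of the first identity:
\[
\nabla_j\nabla_i u = g^{kl}(\nabla_j h_{ik})\nabla_l\Phi + g^{kl}h_{ik}\nabla_j\nabla_l\Phi.
\]
In the first summand I would invoke the Codazzi identity in a space form, which together with the symmetry of $h$ gives full symmetry $\nabla_j h_{ik}=\nabla_k h_{ij}$, hence $g^{kl}(\nabla_j h_{ik})\nabla_l\Phi=g^{kl}(\nabla_k h_{ij})\nabla_l\Phi$. In the second summand I would substitute $\nabla_j\nabla_l\Phi=\phi'g_{jl}-h_{jl}u$ from Lemma \ref{lemm hessian}; contracting, the $\phi'$-piece produces $\phi'h_{ij}$ and the $u$-piece produces $-(h^2)_{ij}u$. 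Combining gives the stated formula.

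The computation is routine, so there is no real obstacle; the only point requiring a little care is the Codazzi step, which is why the formula is stated for space forms (where the ambient curvature terms drop out so that $\nabla h$ is totally symmetric). Everything else is bookkeeping with the identity $\nabla_l\Phi=\langle V,e_l\rangle$ and the two lemmas already established.
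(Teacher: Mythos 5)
Your proof is correct and follows essentially the same route as the paper: differentiate $u=\langle V,\nu\rangle$, discard the $\langle D_{e_i}V,\nu\rangle$ term because $D_{e_i}V=\phi'e_i$ is tangent (Lemma \ref{lemm conformal}), use Weingarten for the other term, then differentiate once more and invoke Codazzi together with the Hessian formula (\ref{lemm hessian Phi}). The only difference is that the paper works at a point in orthonormal coordinates while you carry the $g^{kl}$ factors explicitly; the substance is identical.
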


\begin{proof}
Using another model of the space forms, these formulas were first proved in \cite{BLO}. The same formulas in hyperbolic space were also proved by a direct computation in \cite{JL} using the expression of support function $u$ for a radial graph in hyperbolic space, see the expression of $u$ for a radial graph in (\ref{tensors:rho}).

  By the tensorial properties of both sides, we only need to prove them at any point under orthonormal coordinates. Thus, we have $g_{ij}=\delta_{ij}$ and $\nabla_i u= D_i\<V, \nu\>=\<D\Phi, D_i\nu\>=h_{ik}D_k\Phi$. By the tensorial property of both sides, we proved the first identity.

  Differentiate $\nabla u$ one more time and apply (\ref{lemm hessian Phi}), we obtain
\[
\begin{array}[]{rll}
  \nabla_i\nabla_j u=& \nabla_ih_{jk}\nabla_k\Phi+h_{jk}\nabla_i\nabla_k\Phi\\
  =&\nabla_ih_{jk}\nabla_k\Phi+h_{jk}(\phi'g_{ik}-h_{ik}u)\\
  =&\nabla_kh_{ij}\nabla_k\Phi+\phi' h_{ij}-(h^2)_{ij}u,
\end{array}
\]
where in the last step we have used the fact that second fundamental form of hypersurfaces in space forms is Codazzi.
\end{proof}

\section{A flow of Mean curvature type}\label{sec flow}

Let $M(t)$ be a smooth family of closed hypersurfaces in a space form $N$. Let $X(\cdot,t)$ denote a point on $M(t)$. In general, we have the following evolution property.

\begin{lemm}\label{prop2}
Let $M(t)$ be a smooth family of closed hypersurfaces in $N^{n+1}(K)$ evolving along the flow
\[\partial_t X = f(X(\cdot,t))\nu,\] where $\nu$ is the unit outward normal vector field and $f$ is a function defined on $M(t)$. Then, we have the following evolution equations.

\begin{equation}
\begin{array}{rll}
\partial_t g_{ij} &=& 2fh_{ij}\\
\partial_t d\mu_g &=& fd\mu_g\\
\partial_t h_{ij} &=& -\nabla_i\nabla_j f + f (h^2)_{ij}-Kfg_{ij}\\
\partial_t H  &=& -\Delta f - f |h|^2-nKf,
\end{array}
\end{equation}
where $|h|^2$ is the norm square of second fundamental form and $d\mu_g$ is the volume element of the metric $g(t)$.
\end{lemm}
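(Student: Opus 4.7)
The plan is to derive the four evolution equations in the standard order: first the motion of the unit normal, then the induced metric and volume form, then the second fundamental form, and finally the mean curvature by tracing. Since $|\nu|^2=1$ is preserved, $D_t\nu$ must be tangential; differentiating the identity $\langle \nu,\partial_iX\rangle=0$ in $t$ and using $D_t\partial_iX=D_{\partial_i}(f\nu)=f_i\nu+fD_{\partial_i}\nu$ together with the Weingarten relation yields $\langle D_t\nu,\partial_iX\rangle=-f_i$, and hence $D_t\nu=-\nabla f$, the intrinsic gradient of $f$ on $M(t)$.

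With $D_t\nu$ in hand, the evolution of $g_{ij}$ is immediate: differentiate $g_{ij}=\langle \partial_iX,\partial_jX\rangle$, commute $\partial_t$ with $\partial_i$ and $\partial_j$, and use $\langle \nu,\partial_jX\rangle=0$ together with Weingarten to obtain $\partial_tg_{ij}=2fh_{ij}$; dualizing gives $\partial_tg^{ij}=-2fh^{ij}$. The volume-element formula then follows from the standard identity $\partial_t\sqrt{\det g}=\tfrac12\sqrt{\det g}\,g^{ij}\partial_tg_{ij}$ and the preceding line.

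The main work is the evolution of $h_{ij}$, and this is the only step in which the ambient sectional curvature $K$ enters. I would differentiate the Gauss-Weingarten expression for $h_{ij}$ in $t$ and interchange $D_t$ with the two spatial covariant derivatives; the commutator produces an ambient Riemann curvature term $\overline R(\partial_tX,\partial_iX)\partial_jX=f\,\overline R(\nu,\partial_i)\partial_j$, which in a space form of sectional curvature $K$ collapses via $\overline R_{i\nu j\nu}=Kg_{ij}$. Decomposing the remaining terms into tangent and normal parts via Gauss-Weingarten and substituting $D_t\nu=-\nabla f$ then produces exactly $\partial_th_{ij}=-\nabla_i\nabla_jf+f(h^2)_{ij}-Kfg_{ij}$. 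Contracting this with $g^{ij}$, using $\partial_tg^{ij}=-2fh^{ij}$ and $g^{ij}(h^2)_{ij}=|h|^2$, collapses everything to $\partial_tH=-\Delta f-f|h|^2-nKf$. The genuinely nontrivial ingredient throughout is the curvature commutator used for $\partial_th_{ij}$; the other three formulas are direct consequences of that identity and of the computation $D_t\nu=-\nabla f$.
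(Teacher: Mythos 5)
Your proposal is correct and carries out precisely the standard computation; the paper itself gives no details and simply refers to Huisken, so your argument supplies exactly what the cited reference contains. One remark worth making explicit: the identity you quote, $\partial_t\sqrt{\det g}=\tfrac12\sqrt{\det g}\,g^{ij}\partial_tg_{ij}$, combined with $\partial_tg_{ij}=2fh_{ij}$, yields $\partial_t d\mu_g = fH\,d\mu_g$, not $f\,d\mu_g$ as printed in the lemma; the factor of $H$ is missing from the paper's statement (a typo, as one sees from the area computation in Proposition \ref{prop mono}, where $A'(t)=\int_M fH\,d\mu_g$ is what is actually used), so you should state the corrected formula rather than let the reader infer you have reproduced the misprint.
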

\begin{proof}
  Proof is standard, see for example, \cite{H}.
\end{proof}

Now, we consider the following mean curvature type of flow in $(N,ds^2)$ where $ds^2=d\rho^2+\phi^2(\rho)dz^2$,
\begin{equation}\label{uMCFh}
 \partial_tX =\di (n\phi'(\rho)-uH)\nu.
\end{equation}
Note that $\phi'(\rho)$ is $1$, $\cos(\rho)$, and $\cosh(\rho)$ in $\mathbb R^{n+1}$, $\mathbb S^{n+1}$, and $\mathbb H^{n+1}$ respectively.
\begin{prop}\label{mean curvature evolution}
  Let $M(t)$ be a smooth family of closed hypersurfaces in the space form $N^{n+1}(K)$ evolve along the flow (\ref{uMCFh}). Then the mean curvature evolves as the following

\begin{equation}
\begin{array}{rll}
\partial_t H &=& u\Delta H+H\nabla H\nabla \Phi(\rho)+2\nabla H\nabla u+ (H^2-n|h|^2)\phi'.
\end{array}
\end{equation}
\end{prop}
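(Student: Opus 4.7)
The plan is to feed $f := n\phi'(\rho) - uH$ into the general hypersurface evolution equation of Lemma \ref{prop2}, then expand the resulting Laplacians using the Minkowski-type identities of Section \ref{sec hypersurfaces}. The starting point is
\[
\partial_t H = -\Delta f - f|h|^2 - nKf,
\]
so the task reduces to rewriting $\Delta(n\phi'(\rho) - uH)$ and watching the $K$-dependent terms disappear.

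First I would expand $\Delta(uH) = u\Delta H + H\Delta u + 2\nabla u \cdot \nabla H$ and insert the trace of the Hessian formula (\ref{hessian of u}) from Lemma \ref{hessian of u : lemm}, namely
\[
\Delta u = \nabla H \cdot \nabla \Phi + \phi' H - |h|^2 u.
\]
This already produces the three transport terms $u\Delta H$, $H\nabla H \cdot \nabla \Phi$, $2\nabla u \cdot \nabla H$ and a $\phi' H^2$ term. The $-|h|^2 u H$ contribution from $H\Delta u$ will later cancel against $+uH|h|^2$ coming from $-f|h|^2$.

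The less obvious piece is $\Delta \phi'(\rho)$. The clean way to handle this is to note that in each of the three space forms one has $\phi''(\rho) = -K\phi(\rho)$, so that $\phi'(\rho) + K\Phi(\rho)$ is a constant along $\rho$. Restricting to $M$ and using Lemma \ref{lemm hessian}, this gives
\[
\Delta \phi'(\rho) = -K\,\Delta \Phi = -K\bigl(n\phi'(\rho) - Hu\bigr).
\]
Substituting this into $-n\Delta\phi'$ and combining with the remaining $-n^2 K\phi'$ and $+nKuH$ produced by $-nKf$, every explicit $K$-term cancels. The surviving $-n\phi'|h|^2$ from $-f|h|^2$ combines with $\phi' H^2$ to give the final $(H^2 - n|h|^2)\phi'$ contribution, and collecting everything yields the stated identity.

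The main obstacle is bookkeeping rather than any analytic subtlety: one has to keep straight which terms come from $-\Delta f$, which from $-f|h|^2$, and which from $-nKf$, and to recognize that the algebraic identity $\phi'' + K\phi = 0$ together with the Minkowski-type formula $\Delta\Phi = n\phi' - Hu$ is exactly what is needed to eliminate the curvature $K$ from the final evolution equation. This cancellation is the reason the resulting equation is formally uniform across $\mathbb{R}^{n+1}$, $\mathbb{S}^{n+1}$, and $\mathbb{H}^{n+1}$, which will be important for the estimates in the later sections.
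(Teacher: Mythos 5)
Your proposal is correct and follows essentially the same route as the paper's proof: plug $f = n\phi' - uH$ into Lemma \ref{prop2}, expand $\Delta(uH)$, substitute the trace of Lemma \ref{hessian of u : lemm} for $\Delta u$, and use $\Delta\phi' = -K\Delta\Phi$ (via $\phi'' + K\phi = 0$) to cancel the remaining $K$-dependent terms. You spell out the cancellation of the $|h|^2uH$ and $K$ terms a bit more explicitly than the paper does, but the key lemmas and the structure of the argument are identical.
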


\begin{proof}
  From Lemma \ref{prop2}, we have

\begin{equation}
\begin{array}{rll}
\partial_t H &=& u\Delta H+2\nabla u\nabla H+H\Delta u-n\Delta \phi' -(n\phi'-Hu)(|h|^2+nK)\\
  &=&u\Delta H+2\nabla u\nabla H+H(\nabla H\nabla \Phi+H\phi'-|h|^2u)-n\Delta \phi' \\ &&-(n\phi'-Hu)(|h|^2+nK)\\
  &=&u\Delta H+2\nabla u\nabla H+H\nabla H\nabla \Phi+ (H^2-n|h|^2)\phi' \\
    &&-n(\Delta \phi' +K\Delta\Phi),
\end{array}
\end{equation}
where we have used (\ref{hessian of u}). It's easy to see that $\Delta \phi'=-K\Delta\Phi$ holds for all the three different $K$. This proves the assertion.
\end{proof}

The following estimate will be useful in later sections.

\begin{coro}
  Let $N^{n+1}(K)$ be hyperbolic space. Then along flow (\ref{uMCFh}), the mean curvature is uniformly bounded from above, i.e.,
  \begin{equation}
    H(\cdot, t)\le\max_{x\in M(0)} H(x,0).
    \label{H upper bound}
  \end{equation}
\end{coro}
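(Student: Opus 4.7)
The plan is a straightforward parabolic maximum principle applied to the evolution equation for $H$ derived in Proposition \ref{mean curvature evolution}. Since in hyperbolic space $K=-1$ and $\phi'(\rho)=\cosh(\rho)>0$, the sign structure is favorable.

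First, I would set $H_{\max}(t):=\max_{x\in M(t)}H(x,t)$ and consider a point $(x_0,t_0)$ where $H_{\max}$ is attained. At such a spatial maximum one has $\nabla H(x_0,t_0)=0$ and $\Delta H(x_0,t_0)\le 0$. Plugging this into
\[
\partial_t H = u\Delta H + H\nabla H\cdot\nabla\Phi + 2\nabla H\cdot\nabla u + (H^2-n|h|^2)\phi'(\rho),
\]
and using the starshapedness assumption $u>0$ (which is preserved; this is what makes the flow parabolic), all three gradient/Laplacian terms contribute $\le 0$. Hence
\[
\partial_t H(x_0,t_0)\le (H^2-n|h|^2)\,\phi'(\rho).
\]

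The next step is the pointwise algebraic inequality $H^2\le n|h|^2$, which is just Cauchy--Schwarz applied to the principal curvatures: $H^2=(\kappa_1+\cdots+\kappa_n)^2\le n(\kappa_1^2+\cdots+\kappa_n^2)=n|h|^2$. Combining this with $\phi'(\rho)=\cosh(\rho)>0$ in $\mathbb H^{n+1}$ yields $\partial_t H_{\max}\le 0$ at the maximum. A standard Hamilton-type trick (or a direct application of the parabolic maximum principle to the uniformly parabolic PDE above, where coefficients are bounded on any compact time interval once short-time existence is in hand) then gives that $H_{\max}(t)$ is non-increasing in $t$, and \eqref{H upper bound} follows.

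The only subtle point is that the argument must be applied on time intervals on which the flow is known to have a smooth solution with $u>0$, so that the equation is genuinely parabolic and the maximum principle applies; this is guaranteed at least on the short-time interval of existence, and once the bound is established it helps feed further estimates. No other obstacle arises: the key structural facts are simply the positivity of $\phi'$ in $\mathbb H^{n+1}$ and the Newton inequality $H^2\le n|h|^2$.
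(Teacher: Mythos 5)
Your proof is correct and follows the same approach as the paper: apply the parabolic maximum principle to the evolution equation for $H$ from Proposition \ref{mean curvature evolution}, then use $\phi'=\cosh\rho>0$ together with $H^2\le n|h|^2$ (Newton--MacLaurin, equivalently Cauchy--Schwarz on the principal curvatures) to conclude the reaction term is nonpositive. The paper states this in one line; you have simply spelled out the same argument in more detail.
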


\begin{proof}
  The proof is an immediate corollary of Proposition \ref{mean curvature evolution} by applying maximum principle to $H$. Notice that $\phi'=\cosh\rho>0$ in this case and $(H^2-n|h|^2)\le 0$ by Newton-McLaurin inequality.  
\end{proof}

\begin{rema}
  We need the uniform upper bound of mean curvature $H$ just in the hyperbolic case. 
\end{rema}

Applying the Minkowski identities, we have the following monotonicity formulas,

\begin{prop}
  \label{prop mono}
  Along the flow (\ref{uMCFh}), the volume $V(t)$ of the enclosed domain by the closed hypersurface is a constant and surface area evolves as follows,
  \begin{equation}
    \begin{array}[]{rll}
      \di \frac{d}{dt}A(t)      =& -\di\frac{1}{n-1}\Dint_{M(t)}\sum_{i< j}(\kappa_i(x)-\kappa_j(x))^2ud\mu_g,\\
    \end{array}
    \label{area evolution}
  \end{equation}
  where $\kappa(x)$ are the principal curvatures of the hypersurface at point $x\in M(t)$.
\end{prop}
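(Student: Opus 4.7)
The plan is to reduce both assertions to the two Minkowski identities (Propositions \ref{prop Minkowski} and \ref{prop general Minkowski}) together with an elementary symmetric-function identity. Both formulas follow by differentiating under the integral and substituting the speed $f = n\phi'(\rho)-uH$ of the flow (\ref{uMCFh}).

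First I would handle the volume. Since $M(t)$ bounds $\Omega(t)$ and moves with outward normal speed $f$, the standard first variation formula in a Riemannian manifold gives
\[
\frac{d}{dt}V(t)=\int_{M(t)}f\,d\mu_g=\int_{M(t)}\bigl(n\phi'(\rho)-uH\bigr)\,d\mu_g,
\]
which vanishes by the Minkowski identity $\int_M Hu=n\int_M\phi'(\rho)$ of Proposition \ref{prop Minkowski}.

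For the surface area, I would use $\partial_t d\mu_g = fH\,d\mu_g$ (which follows from $\partial_t g_{ij}=2fh_{ij}$ in Lemma \ref{prop2} together with $\partial_t\log\det g=g^{ij}\partial_t g_{ij}=2fH$) to obtain
\[
\frac{d}{dt}A(t)=\int_M fH\,d\mu_g=n\int_M H\phi'(\rho)\,d\mu_g-\int_M H^2 u\,d\mu_g.
\]
The key step is to apply the generalized Minkowski identity (\ref{general Minkowski}) with $k=1$, namely $(n-1)\int_M\phi'(\rho)H\,d\mu_g=2\int_M\sigma_2(\kappa)u\,d\mu_g$, in order to eliminate the $H\phi'$ term in favor of $\sigma_2(\kappa)u$. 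This yields
\[
\frac{d}{dt}A(t)=\frac{1}{n-1}\int_M\bigl(2n\sigma_2(\kappa)-(n-1)H^2\bigr)u\,d\mu_g.
\]

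To finish, I would invoke the elementary identity
\[
\sum_{i<j}(\kappa_i-\kappa_j)^2=(n-1)\sum_i\kappa_i^2-2\sigma_2(\kappa)=(n-1)H^2-2n\sigma_2(\kappa),
\]
obtained by writing $\sum_i\kappa_i^2=H^2-2\sigma_2(\kappa)$, which converts the integrand into $-\tfrac{1}{n-1}\sum_{i<j}(\kappa_i-\kappa_j)^2\,u$ and gives the claimed formula (\ref{area evolution}). There is no genuine analytic obstacle: the proof is a direct calculation, and the only point demanding care is the bookkeeping of the factor $\tfrac{1}{n-1}$ and the sign when rearranging the Minkowski identity. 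The conceptual content is entirely packaged in the choice of the speed $f=n\phi'(\rho)-uH$, whose mean-value ensures the volume preservation and whose structure aligns with (\ref{general Minkowski}) for $k=1$ to produce the manifestly non-positive area derivative.
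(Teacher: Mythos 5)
Your proof is correct and follows essentially the same route as the paper: zero volume derivative by the Minkowski identity (Proposition \ref{prop Minkowski}), then the generalized Minkowski identity (\ref{general Minkowski}) with $k=1$ plus the elementary identity $\sum_{i<j}(\kappa_i-\kappa_j)^2=(n-1)H^2-2n\sigma_2$ to rewrite the area derivative. One small note: you correctly use $\partial_t d\mu_g = fH\,d\mu_g$, whereas Lemma \ref{prop2} as printed has a typo ($fd\mu_g$); the paper's subsequent computation nevertheless uses the correct formula, as you do.
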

\begin{proof}
  In general, along flow equation $\P_t X= f\nu$, the volume evolution is $V'(t)= \Dint_{M(t)}fd\mu_g$ and surface area satisfies $A'(t)=\Dint_{M(t)}fd\mu_g$. Thus, by Minkowski identity (\ref{Minkowski}),
  \[
  V'(t) = \Dint_M (n\phi'-Hu)d\mu =0,
  \]
  By the general Minkowski identity (\ref{general Minkowski}), we have
  \[
  \begin{array}[]{rll}
  A'(t) =&\Dint_{M}(n\phi'-Hu)Hd\mu_g\\
  =&\Dint_{M}(n\phi'H-\frac{2n}{n-1}\sigma_2(\kappa)u)d\mu+\Dint_M(\frac{2n}{n-1}\sigma_2-H^2)ud\mu\\
  =&-\Dint_M(H^2-\frac{2n}{n-1}\sigma_2)ud\mu\\
  =&-\Dint_M(\sum_i\kappa_i^2-\frac{2}{n-1}\sigma_2)ud\mu\\
  =&-\di\frac{1}{n-1}\Dint_M\sum_{i<j}(\kappa_i-\kappa_j)^2ud\mu.
  \end{array}
  \]
\end{proof}

Next we consider the special case of a bounded convex domain in $\mathbb R^{n+1}$. We first show that the weak convexity of the initial hypersurface is preserved and improved along the flow (\ref{umcf}).

\begin{lemm}
  \label{convexity}
  Let $M_0$ be the smooth boundary of a bounded convex domain in $\mathbb R^{n+1}$ which may have flat side. If $M_t$ is a solution of flow (\ref{umcf}) with initial hypersurface to be $M_0$, then $M_t$ is strictly convex for any $t>0$ whenever the flow exists.
\end{lemm}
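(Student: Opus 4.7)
The plan is to apply the microscopic convexity principle of Bian-Guan \cite{BG} to the evolution equation of the second fundamental form under flow (\ref{umcf}). First I would derive this evolution. Starting from Lemma \ref{prop2} with $f=n-uH$ and $K=0$, we have
\[
\partial_t h_{ij}=\nabla_i\nabla_j(uH)+(n-uH)(h^2)_{ij}.
\]
Expanding by the product rule, substituting the Hessian formula for $u$ from Lemma \ref{hessian of u : lemm} into $H\nabla_i\nabla_j u$, and rewriting $u\nabla_i\nabla_j H$ via Simons' identity in $\mathbb R^{n+1}$, one obtains a strictly parabolic equation of the schematic form
\[
\partial_t h_{ij}=u\,\Delta h_{ij}+\<H\nabla\Phi+2\nabla u,\nabla h_{ij}\>+Q_{ij}(h,\nabla h),
\]
where $Q_{ij}$ collects the algebraic terms in $h$ together with the cross terms $\nabla_i u\,\nabla_j H+\nabla_j u\,\nabla_i H$. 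Strict parabolicity holds because $u>0$ by starshapedness.

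Next I would verify the structural (null-form) condition required by the Bian-Guan machinery. At a point and time where the smallest eigenvalue $\kappa_\alpha$ of $h$ attains the value zero with null eigenvector $e_\alpha$, one exploits that $\kappa_\alpha$ attains a spatial minimum there, so that $\nabla_\beta h_{\alpha\alpha}=0$ for every $\beta$, and then uses the Codazzi equation to rewrite the potentially dangerous mixed terms $\nabla u\otimes\nabla H$ and the drift contribution $H\,\nabla h_{\alpha\alpha}\nabla\Phi$ appearing in $Q_{\alpha\alpha}$ in the form $\sum_{\beta\ne\alpha}c_\beta(\nabla_\alpha h_{\alpha\beta})^2/\kappa_\beta$ modulo terms vanishing at $\kappa_\alpha=0$. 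Showing this sum has the sign demanded by the Bian-Guan structural hypothesis is the heart of the matter; it parallels the verification in \cite{BG} for elliptic Hessian equations, adapted to the present drift-diffusion setting.

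Once the structural condition is checked, the microscopic convexity principle yields that for each $t>0$ the rank of $h_{ij}(\cdot,t)$ is constant on $M_t$ and non-decreasing in $t$. Since $M_0$ bounds a bounded convex body, at any point where the radial distance $\rho$ attains its maximum on $M_0$ the second fundamental form is strictly positive definite (rank $n$), so the maximum spatial rank on $M_0$ is already $n$. The constant-rank conclusion then forces $h_{ij}(x,t)>0$ everywhere on $M_t$ for every $t>0$, which is precisely the claimed strict convexity.

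\textbf{Main obstacle.} The algebraic verification in the second step is the only delicate ingredient. I expect the cross terms $\nabla u\otimes\nabla H$ to be the most troublesome, and controlling them requires differentiating the Codazzi identity and exploiting the minimum-eigenvalue relations at the critical point, exactly in the style of \cite{BG}. Everything else reduces to routine applications of Lemma \ref{prop2} and Lemma \ref{hessian of u : lemm}.
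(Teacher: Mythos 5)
Your overall plan---invoke the Bian--Guan microscopic convexity / constant rank principle---is the same route the paper takes, and the observation that $\nabla u$ and $\nabla\nabla u$ can be rewritten in terms of $h$ and $\nabla h$ via Lemma~\ref{hessian of u : lemm} is exactly the modification the authors emphasize.  However, as written the proposal has two genuine gaps.

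First, the verification of the structural condition cannot be done by tracking the smallest eigenvalue $\kappa_\alpha$ and using ``$\nabla_\beta h_{\alpha\alpha}=0$ at a minimum.''  The minimum eigenvalue is only Lipschitz, its null space may have dimension greater than one, and the whole point of the Bian--Guan machinery is to replace this naive eigenvalue argument with the $C^{1,1}$ test function $\phi=\sigma_{l+1}(W)+\sigma_{l+2}(W)/\sigma_{l+1}(W)$ built from the regularized Weingarten tensor $W=g^{-1}h+\epsilon I$, and to establish a differential inequality $F^{ij}\phi_{ij}-\phi_t\le C_1\phi+C_2|\nabla\phi|$ near the minimum-rank point.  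The paper does not re-derive an evolution equation for $h_{ij}$ and hunt for a null form; instead it writes the flow in the form $X_t=-F(g^{-1}h,X,\nu,u,t)\nu$ with $F=u\sigma_1(g^{-1}h)-n$, checks that $F(A^{-1},X,\nu,u,t)$ is locally convex in $(A,X)$ and monotone in $A$, and then adapts the proof of Proposition~5.1 of Bian--Guan \emph{treating $u$ as an extra independent variable of $F$}.  The new terms $F^u u_{ii}$, $F^{u,u}u_i^2$, $F^{\alpha\beta,u}W_{i\alpha\beta}u_i$, etc., in the computation of $F^{\alpha\beta}\phi_{\alpha\beta}-\phi_t$ are then controlled by $W_{ii}$ and $\nabla W_{ii}$ precisely because Lemma~\ref{hessian of u : lemm} expresses $u_i$ and $u_{ii}$ through $h$ and $\nabla h$.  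Your schematic evolution equation for $h_{ij}$ and the ``null-form at a vanishing eigenvalue'' heuristic do not reproduce this, and it is doubtful they can be made rigorous without essentially rebuilding the regularized test-function argument.

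Second, the constant rank theorem is \emph{conditional}: it assumes $h(t)\ge 0$ on the whole time interval, so it does not by itself show that weak convexity is preserved.  Your final step jumps from ``constant rank'' to ``$h>0$ everywhere for $t>0$'' without first establishing that $h(t)\ge 0$ persists.  The paper closes this gap with an approximation argument: replace $M_0$ by strictly convex $M_0^\epsilon$, use the constant rank statement to show that strict convexity cannot be lost on any time interval where it initially holds, pass to the limit $\epsilon\to 0$ to conclude $M(t)$ remains convex, and only then invoke the constant rank statement once more to upgrade to strict convexity for $t>0$.  You should add this step (or some equivalent) for the argument to be complete.
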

\begin{proof}
  The proof follows immediately from Theorem 1.4 of Bian-Guan \cite{BG}.
\end{proof}

\begin{prop}
  \label{prop mono quermass}
 Let $M_0$ be the smooth boundary of a bounded convex domain in $\mathbb R^{n+1}$. If $M_t$ is a solution of flow (\ref{umcf}) with initial hypersurface to be $M_0$, then for any $k=0,1,\cdots, n-2$,

 \begin{equation}
    \begin{array}[]{rll}
      \di \frac{d}{dt}\Dint_{M}\sigma_k(\kappa)d\mu      \le 0 ,\\
    \end{array}
    \label{}
  \end{equation}
  where $\kappa(x)$ are the principal curvatures of the hypersurface at point $x\in M(t)$.
\end{prop}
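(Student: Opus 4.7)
The plan is to differentiate the quermassintegral along the flow, eliminate the Hessian-of-speed term by the divergence-free property of the Newton tensor recalled in Section \ref{sec hypersurfaces}, apply the generalized Minkowski identity of Proposition \ref{prop general Minkowski} to combine the two remaining terms into a single integrand weighted by $u$, and close using a Newton-MacLaurin inequality, which is available because Lemma \ref{convexity} preserves convexity.

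Concretely, set $f = n - Hu$ (so $\phi' \equiv 1$ and $K=0$). Combining $\partial_t h_{ij}$ from Lemma \ref{prop2} with $\partial_t g^{ij} = -2fh^{ij}$ gives $\partial_t h^i_j = -\nabla^i\nabla_j f - f(h^2)^i_j$. Contracting with the Newton tensor $\sigma_k^{ij}=(T_{k-1})^{ij}$, using the standard identity $\sigma_k^{ij}(h^2)^j_i = \sigma_1\sigma_k - (k+1)\sigma_{k+1}$ together with $\partial_t d\mu_g = fH d\mu_g$, the $fH\sigma_k$ and $-f\sigma_1\sigma_k$ contributions cancel (since $H = \sigma_1$), yielding
$$\frac{d}{dt}\int_M \sigma_k d\mu_g = \int_M \left[-(T_{k-1})^{ij}\nabla_i\nabla_j f + (k+1)f\sigma_{k+1}\right]d\mu_g.$$
Since $T_{k-1}$ is divergence free on $M_t$, the first integrand equals $\nabla_i((T_{k-1})^{ij}\nabla_j f)$, which integrates to zero on the closed hypersurface, leaving
$$\frac{d}{dt}\int_M \sigma_k d\mu_g = (k+1)\int_M (n-Hu)\sigma_{k+1} d\mu_g.$$

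Next I would invoke Proposition \ref{prop general Minkowski} at index $k+1$; this requires $k+1 \le n-1$, which is precisely the restriction $k \le n-2$ in the statement. With $\phi'\equiv 1$ it gives $n\int_M \sigma_{k+1} d\mu_g = \frac{n(k+2)}{n-k-1}\int_M u\sigma_{k+2} d\mu_g$, so
$$\frac{d}{dt}\int_M \sigma_k d\mu_g = (k+1)\int_M u\left[\frac{n(k+2)}{n-k-1}\sigma_{k+2} - H\sigma_{k+1}\right]d\mu_g.$$
By Lemma \ref{convexity}, $M_t$ is strictly convex for $t>0$, so the principal curvatures lie in the positive cone and Newton-MacLaurin (iterating the monotonicity $P_{j+1}/P_j \le P_j/P_{j-1}$ of the normalized elementary symmetric functions down to $P_1$) yields $H\sigma_{k+1} \ge \frac{n(k+2)}{n-k-1}\sigma_{k+2}$. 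Combined with $u > 0$ (forced by parabolicity of the flow, equivalently by the origin lying in the convex body), the bracket is nonpositive pointwise and the proposition follows.

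The main obstacle I foresee is choosing the correct Minkowski index in the substitution step: the coefficient $n(k+2)/(n-k-1)$ produced by rewriting $n\int \sigma_{k+1}$ has to coincide exactly with the sharp Newton-MacLaurin constant relating $H\sigma_{k+1}$ and $\sigma_{k+2}$, and this matches only for this single choice of index. This alignment is what makes the flow (\ref{umcf}) monotonise every quermassintegral simultaneously; the remainder is careful but essentially routine tensor bookkeeping, lubricated by the coincidence $H = \sigma_1$ and by the divergence-free character of $T_{k-1}$ peculiar to hypersurfaces of space forms.
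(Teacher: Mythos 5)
Your proof is correct and follows essentially the same route as the paper: derive $\frac{d}{dt}\int_M\sigma_k\,d\mu = (k+1)\int_M F\sigma_{k+1}\,d\mu$ (which the paper simply cites from its earlier reference \cite{GL}, whereas you rederive it from the evolution equations and the divergence-free Newton tensor), then combine the generalized Minkowski identity at index $k+1$ with the Newton--MacLaurin inequality $\sigma_1\sigma_{k+1}\ge \frac{n(k+2)}{n-k-1}\sigma_{k+2}$, valid by preserved strict convexity. The only cosmetic difference is the order of the two steps: the paper bounds $u\sigma_1\sigma_{k+1}\ge c_1 u\sigma_{k+2}$ pointwise first and then invokes Minkowski to see the resulting integral vanishes, while you substitute Minkowski first and then close with the pointwise inequality; these are equivalent.
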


\begin{proof}
  Recall for a general evolution equation,
  \[
  \P_t X = F\nu,
  \]
  with speed function $F(X, t)$, we have the evolution equation of the qumermassintegrals,  c.f. Proposition 4 of \cite{GL},
  \begin{equation}
  \di \frac{d}{dt}\Dint_{M}\sigma_k(\kappa)d\mu      = (k+1)\Dint_M F\sigma_{k+1}(\kappa)d\mu .\\
  \label{mono quermass equ1}
\end{equation}

  For any $t>0$, $\sigma_l>0$ for all $l=0,1,\cdots,n$ by Lemma \ref{convexity}. By Newton-McLaurin inequality, we have the following,
  \begin{equation}
    \sigma_1\sigma_{k+1}\ge c_1\sigma_{k+2}.
  \label{mono quermass equ2}
  \end{equation}
  where $c_1=\frac{n(k+2)}{n-k-1}$.
  Let $F=n-Hu$ in (\ref{mono quermass equ1}) and apply inequality (\ref{mono quermass equ2}), we have
\[
\begin{array}[]{rll}
  \di \frac{d}{dt}\Dint_{M}\sigma_k(\kappa)d\mu      =& (k+1)\Dint_M (n\sigma_{k+1}-u\sigma_1\sigma_{k+1})d\mu \\
  \le &(k+1) \Dint_M (n\sigma_{k+1}-c_1u\sigma_{k+2})d\mu\\
  =&0,
\end{array}
\]
where in the last step we used Minkowski identity (\ref{general Minkowski}). This completes the assertion.
\end{proof}

\section{Radial graph and $C^0$ estimate}\label{sec C0}

Let $(M,g)$ be a hypersurface in $N^{n+1}(K)$ with induced metric $g$. We now give the local expressions of the induced metric, second fundamental form, Weingarten curvatures etc when $M$ is a graph of a smooth and positive function $\rho(z)$ on $\mathbb S^n$. Let $\P_1, \cdots, \P_n$ be a local frame along $M$ and $\P_\rho$ be the vector field along radial direction. Then the support function, induced metric, inverse metric matrix, second fundamental form can be expressed as follows (\cite{O}). For simplicity, all the covariant derivatives with respect to the standard spherical metric $e_{ij}$ will also be denoted as $\nabla$ when there is no confusion in the context.
\begin{equation}\label{tensors:rho}
    \begin{array}{rll}
    u=&\frac{\phi^2}{\sqrt{\phi^2+|\nabla \rho|^2}}\\
    g_{ij}=& \phi^2\delta_{ij}+\rho_i\rho_j,\quad g^{ij}=\frac{1}{\phi^2}(e^{ij}-\frac{\rho_i\rho_j}{\phi^2+|\nabla \rho|^2})\\
    h_{ij}=&(\sqrt{\phi^2+|\nabla \rho|^2})^{-1}(-\phi\nabla_i\nabla_j\rho+2\phi'\rho_i\rho_j+\phi^2\phi'e_{ij})\\
    h^i_j=& \frac{1}{\phi^2\sqrt{\rho^2+|\nabla \rho|^2}}(e^{ik}-\frac{\rho_i\rho_k}{\phi^2+|\nabla \rho|^2})(-\phi\nabla_k\nabla_j\rho+2\phi'\rho_k\rho_j+\phi^2\phi'e_{kj})
    \end{array}
\end{equation}
where all the covariant derivatives $\nabla$ and $\rho_i$ are w.r.t. the spherical metric $e_{ij}$. \\

It will be convenient if we introduce a new variable $\gamma$ satisfying
\[
\frac{d\gamma}{d\rho}=\frac{1}{\phi(\rho)}.
\]

Let $\omega:=\sqrt{1+|\nabla\gamma|^2}$, one can compute the unit outward normal $\nu= \frac{1}{\omega}(1,-\frac{\rho_1}{\phi^2},\cdots, -\frac{\rho_n}{\phi^2})$ and the general support function $u=\<D\phi, \nu\>= \frac{\phi}{\omega}$. Moreover,
\begin{equation}\label{tensors:gamma}
    \begin{array}{rll}
g_{ij}=& \phi^2\delta_{ij}+\rho_i\rho_j,\quad g^{ij}=\frac{1}{\phi^2}(e^{ij}-\frac{\gamma_i\gamma_j}{\omega^2})\\
    h_{ij}=&\frac{\phi}{\omega}(-\gamma_{ij}+\phi'\gamma_i\gamma_j+\phi'e_{ij})\\
    h^i_j=& \frac{1}{\phi\omega}(e^{ik}-\frac{\gamma_i\gamma_k}{\omega^2})(-\gamma_{kj}+\phi'\gamma_k\gamma_j+\phi'e_{kj})
    \end{array}
\end{equation}
where all the covariant derivatives are w.r.t. the spherical metric $e_{ij}$. \\

We also have for the mean curvature,
\begin{equation}
    H= \frac{1}{\phi\omega}(e^{ij}-\frac{\gamma_i\gamma_j}{\omega^2})(-\gamma_{ij}+\phi'\gamma_i\gamma_j+\phi'e_{ij}).
  \label{mean curvature}
\end{equation}

We now consider the flow equation (\ref{uMCFh}) of radial graphs over $\mathbb S^n$ in $N^{n+1}(K)$. It is known (\cite{G1}) if a closed hypersurface which is a radial graph and satisfies
\[
\P_t X= f\nu,
\]
then the evolution of the scalar function $\rho=\rho(X(z,t),t)$ satisfies\[
\P_t\rho = f\omega.
\]

Thus we only need to consider the following parabolic initial value problem on $\mathbb S^n$,
\begin{equation}
  \left\{
  \begin{array}[]{rll}
    \P_t\rho =& (n\phi'-Hu)\omega, {\rm for\quad}(z,t)\in \mathbb S^n\times [0,\infty)\\
    \rho(\cdot, 0)=&\rho_0,
  \end{array}
  \right.
  \label{ivp}
\end{equation}
where $\rho_0$ is the radial function of the initial star-shaped hypersurface.

Equivalently, the equation for $\gamma$ satisfies

\begin{equation}
  \begin{array}[]{rll}
    \P_t\gamma =& (n\phi'-Hu)\frac{\omega}{\phi}  \end{array}
  \label{ivp gamma}
\end{equation}

We next show that the radial function $\rho$ is uniformly bounded from above and below.
\begin{prop}
  Let $M_0$ be a radial graph of function $\rho_0$ embedded in space form $N^{n+1}(K)$. If $\rho(z,t)$ solves the initial value problem (\ref{ivp}), then for any $(z,t)\in \mathbb S^n\times [0, T]$,
\[\min_{x\in M} \rho(x,0)\le \rho(x, t)\le \max_{x\in M} \rho(x,0).\]
\end{prop}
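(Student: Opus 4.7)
The plan is to apply the parabolic maximum principle directly to the scalar evolution equation
\[
\P_t\rho = (n\phi'(\rho)-Hu)\omega
\]
from (\ref{ivp}). Since $\omega\ge 1$ is always positive, the sign of $\P_t\rho$ is governed entirely by $n\phi'(\rho)-Hu$, so I only need to control this quantity at a spatial maximum and at a spatial minimum of $\rho(\cdot,t)$.

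First I would pick $t_0>0$ and a point $z_0\in\mathbb S^n$ where $\rho(\cdot,t_0)$ attains its spatial maximum. At $z_0$ we have $\nabla\rho=0$, hence $\nabla\gamma=0$ and $\omega=1$, so the formulas in (\ref{tensors:gamma}) and (\ref{mean curvature}) collapse to
\[
u=\phi(\rho),\qquad H=\frac{1}{\phi(\rho)}\bl n\phi'(\rho)-\Delta_{\mathbb S^n}\gamma\br.
\]
Because $\gamma$ attains a spatial maximum at $z_0$ together with $\rho$, the spherical Hessian of $\gamma$ is negative semidefinite there, so $\Delta_{\mathbb S^n}\gamma\le 0$. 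This yields $H\ge n\phi'(\rho)/\phi(\rho)$, hence $Hu\ge n\phi'(\rho)$, and therefore $\P_t\rho\le 0$ at $z_0$. The standard maximum principle argument for parabolic ODE inequalities (applied to $\rho_{\max}(t):=\max_{\mathbb S^n}\rho(\cdot,t)$, or equivalently to $\rho(z,t)-\max_{\mathbb S^n}\rho_0-\varepsilon t$ with $\varepsilon\to 0$) then gives the upper bound $\rho(z,t)\le\max_{\mathbb S^n}\rho_0$.

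The lower bound is symmetric: at a spatial minimum of $\rho$, again $\nabla\gamma=0$, $\omega=1$, $u=\phi(\rho)$, but now the Hessian of $\gamma$ is positive semidefinite so $\Delta_{\mathbb S^n}\gamma\ge 0$, giving $Hu\le n\phi'(\rho)$ and thus $\P_t\rho\ge 0$ there. The maximum principle then yields $\rho(z,t)\ge\min_{\mathbb S^n}\rho_0$.

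There is no real obstacle here beyond checking the geometric identity $Hu\gtreqless n\phi'(\rho)$ at critical points of $\rho$; the key ingredient is the pointwise observation that at a critical point of the radial function the hypersurface is tangent to a geodesic sphere of radius $\rho$, and is locally inside (resp. outside) this sphere at a maximum (resp. minimum), which forces the correct inequality between $H$ and the mean curvature $n\phi'/\phi$ of the comparison sphere. This comparison is exactly what the Minkowski-type identity (\ref{Minkowski}) was designed to exploit, and it is precisely why the normalization constant $n$ in the flow makes the $C^0$ bound almost automatic.
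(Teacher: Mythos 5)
Your argument is correct and follows essentially the same approach as the paper: both reduce to the critical-point simplification of the radial-graph formulas (at which $\nabla\gamma=0$, $\omega=1$, $u=\phi$, and $H=\phi^{-1}(n\phi'-\Delta\gamma)$), yielding $\P_t\rho=\phi^{-1}\Delta\rho$ at spatial extrema, and conclude via the parabolic maximum principle. One tangential remark: the $C^0$ bound uses the pointwise radial-graph formula (\ref{mean curvature}) rather than the integrated Minkowski identity (\ref{Minkowski}), which is instead what drives the volume and area monotonicity; but this does not affect the validity of your proof.
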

\begin{proof}
  Note $\nabla\gamma = \frac{1}{\phi}\nabla \rho$, thus $\gamma$ attains maximum or minimum at the same point as $\rho$. At critical points of $\gamma$ and $\rho$, we have the following critical point conditions,
 \[\nabla \gamma=0, \nabla \rho=0, \omega =1.\]

  Then it follows from (\ref{tensors:gamma}) that, at critical points of $\gamma$ and $\rho$, $H=\frac{1}{\phi}(-\Delta \gamma +n\phi')$. Together with (\ref{ivp gamma}), at critical points,
  \[ \gamma_t=\frac{1}{\phi}\Delta \gamma, \quad{\rm and}\quad \P_t\rho=\frac{1}{\phi}\Delta\rho.\]
By standard maximum principle, this proves the uniform upper and lower bounds for $\gamma$ and $\rho$.
\end{proof}

\section{Gradient estimate and convergence }\label{sec estimates}
We need the following technical lemma.

\begin{lemm}\label{tech lemm}
  Let $\gamma$ be a solution of (\ref{ivp gamma}) and assume $|\nabla\gamma|^2$ attains maximum value at point $P$, then at $P$, the following holds,
  \begin{equation}
    \mathcal L(\frac{|\nabla\gamma|^2}{2})=n\frac{\phi''\phi-(\phi')^2}{\phi\omega}|\nabla\gamma|^4-\frac{n-1}{\phi\omega}|\nabla\gamma|^2-\frac{1}{\phi\omega}|\nabla\nabla\gamma|^2-\frac{\phi'}{\phi\omega}\Delta \gamma |\nabla\gamma|^2,
    \label{tech lemm equ}
  \end{equation}
  where $\mathcal L $ is a strictly parabolic operator defined as follows: for any function $f(z,t)$,
  \[
  \mathcal L(f):=f_t-\frac{1}{\phi\omega}(e^{ij}-\frac{\gamma_i\gamma_j}{\omega^2})f_{ij}
  \]
\end{lemm}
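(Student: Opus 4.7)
\medskip

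\noindent\textbf{Proof plan.}
The starting point is to recast the flow equation in a form in which $\mathcal L$ appears explicitly. With $Q^{ij}:=\frac{1}{\phi\omega}(e^{ij}-\gamma_i\gamma_j/\omega^2)$, the formula (\ref{mean curvature}) for $H$ simplifies through the algebraic identity $(e^{ij}-\gamma_i\gamma_j/\omega^2)(\phi'\gamma_i\gamma_j+\phi' e_{ij})=n\phi'$ to $H=\frac{n\phi'}{\phi\omega}-Q^{ij}\gamma_{ij}$. Substituting this together with $u=\phi/\omega$ into (\ref{ivp gamma}) yields the clean identity
\[
\mathcal L(\gamma)=\frac{n\phi'|\nabla\gamma|^2}{\phi\omega}.
\]

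Next, apply $\mathcal L$ to $\eta:=\tfrac{1}{2}|\nabla\gamma|^2$. The Leibniz rule for the second-order part produces
\[
\mathcal L(\eta)=\gamma^k\bigl[\nabla_k\gamma_t-Q^{ij}\gamma_{kij}\bigr]-Q^{ij}\gamma_{ki}\gamma^k{}_j,
\]
while differentiating the identity above gives
\[
\nabla_k\gamma_t=(\nabla_k Q^{ij})\gamma_{ij}+Q^{ij}\gamma_{ijk}+\nabla_k\!\left(\frac{n\phi'|\nabla\gamma|^2}{\phi\omega}\right).
\]
The combination $Q^{ij}(\gamma_{ijk}-\gamma_{kij})$ is a pure curvature correction. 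The Ricci identity on $\mathbb S^n$ (with $R_{ijkl}=e_{ik}e_{jl}-e_{il}e_{jk}$) gives $\gamma_{ijk}-\gamma_{kij}=\gamma_je_{ik}-\gamma_ke_{ij}$, and a direct contraction collapses it to $-(n-1)\gamma_k/(\phi\omega)$, which after pairing with $\gamma^k$ accounts for the $-\frac{n-1}{\phi\omega}|\nabla\gamma|^2$ term of (\ref{tech lemm equ}).

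It remains to evaluate the three other contributions at $P$. Choose an orthonormal frame at $P$ with $\gamma_1=|\nabla\gamma|$ and $\gamma_i=0$ for $i\ge 2$; the critical-point condition $\nabla_l|\nabla\gamma|^2=0$ becomes $\gamma_{1j}=0$ for every $j$, which forces $\nabla_k\omega=0$ and reduces $Q^{ij}\gamma_{ij}$ to $\Delta\gamma/(\phi\omega)$. Using $\nabla_k\phi=\phi'\phi\gamma_k$ (from $d\rho=\phi\,d\gamma$) one then checks
\[
\gamma^k\nabla_k\!\left(\frac{n\phi'|\nabla\gamma|^2}{\phi\omega}\right)\bigg|_P=\frac{n(\phi''\phi-(\phi')^2)}{\phi\omega}|\nabla\gamma|^4,\qquad
\gamma^k(\nabla_kQ^{ij})\gamma_{ij}\big|_P=-\frac{\phi'\Delta\gamma}{\phi\omega}|\nabla\gamma|^2,
\]
and $Q^{ij}\gamma_{ki}\gamma^k{}_j\big|_P=\frac{1}{\phi\omega}|\nabla\nabla\gamma|^2$; the last equality uses that the only exceptional diagonal entry, $Q^{11}=1/(\phi\omega^3)$, is paired with $\gamma_{k1}^2$, which vanishes by the maximum condition. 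Summing the four pieces gives (\ref{tech lemm equ}). The main technical obstacle is the bookkeeping: one must carefully track which intermediate quantities (e.g.~$\gamma^k\nabla_k\omega$, the mixed pieces of $\nabla_k Q^{ij}$, and derivatives of $1/\omega^2$) drop out at $P$, and correctly handle the curvature correction arising from commuting third covariant derivatives on the unit sphere.
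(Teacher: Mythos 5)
Your proof is correct and follows essentially the same route as the paper: differentiate the flow equation for $\gamma$, contract with $\gamma^k$, commute derivatives on $\mathbb S^n$ to extract the $-(n-1)|\nabla\gamma|^2/(\phi\omega)$ term, and evaluate the remaining pieces using the critical-point conditions $\gamma_k\gamma_{kl}=0$, $\nabla\omega=0$. The one organizational difference is that you first distill the flow into the clean scalar identity $\mathcal L(\gamma)=\frac{n\phi'|\nabla\gamma|^2}{\phi\omega}$ and then apply $\mathcal L$ to $\tfrac12|\nabla\gamma|^2$ by Leibniz, whereas the paper writes the expanded right-hand side $n\omega\phi'/\phi-\frac{1}{\phi\omega}\bigl(-\Delta\gamma+\gamma_i\omega_i/\omega+n\phi'\bigr)$, differentiates, and only afterwards recognizes that $\tfrac{1}{\phi\omega}\Delta(\cdot)-\frac{\omega_{ik}\gamma_i\gamma_k}{\phi\omega^2}$ reassembles into $Q^{ij}\partial_i\partial_j$; likewise you invoke the Ricci identity with $R_{ijkl}=e_{ik}e_{jl}-e_{il}e_{jk}$ while the paper uses the equivalent fact that $\gamma_{ij}+\gamma e_{ij}$ is a Codazzi tensor on $\mathbb S^n$. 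These are presentational choices, not different arguments; your version makes the structure of the parabolic operator more transparent and is arguably easier to follow, but the underlying computation is the same.
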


\begin{proof}
By critical point condition, at $P$ we have
 \[
 \nabla |\nabla\gamma|^2=0, \nabla\omega =0.
 \]
 From (\ref{tensors:gamma}) and (\ref{ivp gamma}), using critical point condition, we have
\[
\P_t\gamma = n\omega \frac{\phi'}{\phi}-\frac{1}{\phi\omega}(-\Delta\gamma +\frac{\gamma_i\omega_i}{\omega}+n\phi').
\]

Thus, at $P$
\begin{equation}
\begin{array}[]{rll}
  \P_t\frac{|\nabla\gamma|^2}{2} = & \gamma_k\nabla_k\gamma_t\\
  =&\gamma_k\nabla_k\big(n\omega \frac{\phi'}{\phi}-\frac{1}{\phi\omega}(-\Delta\gamma +\frac{\gamma_i\omega_i}{\omega}+n\phi')\big)\\
  =&\gamma_k\nabla_k\big(\frac{n\phi'}{\phi\omega}|\nabla\gamma|^2+\frac{\Delta\gamma}{\phi\omega}-\frac{\gamma_i\omega_i}{\omega^2}\big)\\
  =&n\frac{\phi''\phi-(\phi')^2}{\phi\omega}|\nabla\gamma|^4-\frac{\phi'}{\phi\omega}\Delta \gamma |\nabla\gamma|^2+\frac{1}{\phi\omega}\nabla_k\Delta \gamma\nabla_k\gamma-\frac{\omega_{ki}\gamma_i\gamma_k}{\phi\omega^2}\\
\end{array}
\label{gradient equ1}
\end{equation}
Recall on sphere, tensor $A_{ij}:=f_{ij}+fe_{ij}$ is Codazzi for any function $f(z)$, where $e_{ij}$ is the standard spherical metric and the covariant derivatives are w.r.t. $e_{ij}$. Thus
\[
  \nabla_i(\gamma_{jk}+\gamma e_{jk}) = \nabla_k(\gamma_{ij}+\gamma e_{ij}).
\]
Applying this property, we have
\begin{equation}
  \begin{array}[]{rll}
  \nabla_k\Delta\gamma\nabla_k\gamma=& \nabla_i\gamma_{ki}\nabla_k\gamma-(n-1)|\nabla\gamma|^2 \\
  =&\Delta\frac{|\nabla\gamma|^2}{2}-|\nabla\nabla\gamma|^2-(n-1)|\nabla\gamma|^2.
  \end{array}
  \label{gradient equ2}
\end{equation}

Plug (\ref{gradient equ2}) into (\ref{gradient equ1}), we have
\begin{equation}
\begin{array}[]{rll}
  \P_t\frac{|\nabla\gamma|^2}{2} = & \frac{1}{\phi\omega}\Delta \frac{|\nabla\gamma|^2}{2}-\frac{\omega_{ik}\gamma_i\gamma_k}{\phi\omega^2}\\
  &+ n\frac{\phi''\phi-(\phi')^2}{\phi\omega}|\nabla\gamma|^4-\frac{n-1}{\phi\omega}|\nabla\gamma|^2-\frac{1}{\phi\omega}|\nabla\nabla\gamma|^2-\frac{\phi'}{\phi\omega}\Delta \gamma |\nabla\gamma|^2\\
\end{array}
\label{gradient equ3}
\end{equation}

Note that
\[
\frac{1}{\phi\omega}(e^{ij}-\frac{\gamma_i\gamma_j}{\omega^2})(\frac{|\nabla\gamma|^2}{2})_{ij}=\frac{1}{\phi\omega}\Delta \frac{|\nabla\gamma|^2}{2}-\frac{\omega_{ik}\gamma_i\gamma_k}{\phi\omega^2}.
\]
This completes the proof.
\end{proof}

\subsection{ $\mathbb R^{n+1}$ and $\mathbb S^{n+1}$ cases}
In this subsection, we give a priori estimates for $\mathbb R^{n+1}$ and $\mathbb S^{n+1}$ cases which are relatively easier.

\begin{prop} \label{prop R and S}
   Let $M_0$ be a radial graph of function $\rho_0$ over $\mathbb S^n$ in space form $N^{n+1}(K)$ for $K=0$ or $+1$. If $\rho(z,t)$ solves the initial value problem (\ref{ivp}) on interval $[0,T]$, then for any $(z,t)\in \mathbb S^n\times [0, T]$,
\[|\nabla \rho|(z,t)\le C\]
where $C$ is a uniform constant only depends on the initial values and the covariant derivatives are w.r.t. the spherical metric on $\mathbb S^n$.
Moreover, the following exponential convergence holds, for any $(z,t)\in \mathbb S^n\times [0, T]$,
\begin{equation}
e^{\alpha t}|\nabla\gamma|^2(z,t)\le \di\max_{z\in \mathbb S^n}|\nabla\gamma|^2(z,0),
\label{prop exp}
\end{equation}
where $\alpha>0$ is a uniform constant depending only on the initial graph.
\end{prop}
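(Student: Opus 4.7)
The plan is to apply the maximum principle to $|\nabla\gamma|^2$, using Lemma \ref{tech lemm} as the main computational input. The key algebraic observation is that in both cases at hand, $\phi''\phi-(\phi')^2=-1$: in the Euclidean case $(\phi=\rho)$ this is $0-1$, and in the spherical case $(\phi=\sin\rho)$ this is $-\sin^2\rho-\cos^2\rho$. Consequently the $|\nabla\gamma|^4$ coefficient on the right-hand side of (\ref{tech lemm equ}) carries the favorable sign $-n/(\phi\omega)$. A second useful fact is the uniform bound $|\phi'|\le 1$, valid for both $K=0$ and $K=+1$, which will be decisive when absorbing the only sign-indefinite term $-\frac{\phi'}{\phi\omega}\Delta\gamma|\nabla\gamma|^2$.

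For the gradient bound, let $(z_0,t_0)$ be a space-time maximum of $|\nabla\gamma|^2$ on $\mathbb S^n\times[0,T]$; if $t_0=0$ the bound is immediate, so assume $t_0>0$, in which case $\mathcal L(|\nabla\gamma|^2/2)\ge 0$ at $(z_0,t_0)$. I would handle the cross term by combining the Cauchy--Schwarz bound $(\Delta\gamma)^2\le n|\nabla\nabla\gamma|^2$ with Young's inequality, weighted so that the resulting $|\nabla\nabla\gamma|^2$ piece cancels the negative $-|\nabla\nabla\gamma|^2/(\phi\omega)$ produced by Lemma \ref{tech lemm}. The price paid is an extra $\frac{n(\phi')^2}{4\phi\omega}|\nabla\gamma|^4$, dominated by $-\frac{n}{\phi\omega}|\nabla\gamma|^4$ thanks to $(\phi')^2\le 1$. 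This leaves
\[
0\le \mathcal L(|\nabla\gamma|^2/2)\le -\frac{3n}{4\phi\omega}|\nabla\gamma|^4-\frac{n-1}{\phi\omega}|\nabla\gamma|^2
\]
at $(z_0,t_0)$, forcing $|\nabla\gamma|(z_0,t_0)=0$. Either way one concludes $|\nabla\gamma|^2(z,t)\le\max_{z\in\mathbb S^n}|\nabla\gamma|^2(z,0)$, from which $|\nabla\rho|\le C$ follows via $\nabla\rho=\phi\,\nabla\gamma$ and the $C^0$ estimate on $\rho$.

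For the exponential decay (\ref{prop exp}), I apply the same maximum principle to $W:=e^{\alpha t}|\nabla\gamma|^2/2$. Since $\mathcal L(e^{\alpha t}f)=e^{\alpha t}(\alpha f+\mathcal L f)$ and $W$ and $|\nabla\gamma|^2$ share their spatial maxima at any fixed time, the pointwise estimate above continues to hold at a space-time interior maximum of $W$. Dropping the (favorable) quartic term,
\[
0\le \mathcal L(W)\le e^{\alpha t}|\nabla\gamma|^2\left(\frac{\alpha}{2}-\frac{n-1}{\phi\omega}\right).
\]
The $C^0$ bound on $\rho$ together with the gradient bound just obtained give a uniform upper bound on $\phi\omega$, so choosing $\alpha>0$ small enough (depending only on initial data) makes the bracket strictly negative, contradicting $\mathcal L(W)\ge 0$. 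Hence the maximum of $W$ is attained at $t=0$, which is exactly (\ref{prop exp}).

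The principal obstacle is the sign-indefinite term $-\frac{\phi'}{\phi\omega}\Delta\gamma|\nabla\gamma|^2$, and the whole strategy rests on absorbing it into the negative $-|\nabla\nabla\gamma|^2/(\phi\omega)$ via the bound $(\phi')^2\le 1$. In the hyperbolic case $\phi'=\cosh\rho$ is unbounded, so this Young-type absorption loses its margin and the pure gradient maximum principle no longer closes; this is presumably why the next subsection must instead exploit the separate mean curvature upper bound (\ref{H upper bound}) for $K=-1$.
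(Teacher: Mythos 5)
Your argument is correct and follows essentially the same path as the paper's own proof: start from Lemma \ref{tech lemm}, absorb the cross term $-\frac{\phi'}{\phi\omega}\Delta\gamma|\nabla\gamma|^2$ together with $-\frac{1}{\phi\omega}|\nabla\nabla\gamma|^2$ via the Cauchy--Schwarz bound $(\Delta\gamma)^2\le n|\nabla\nabla\gamma|^2$ and a weighted quadratic estimate, then observe that the residual quartic coefficient stays negative because $(\phi')^2\le 1$ when $K=0$ or $+1$, and conclude with the maximum principle (with $e^{\alpha t}$-weighting for the exponential decay). The paper's version arrives at the same coefficient $1-\tfrac{1}{4}(\phi')^2 = \tfrac34(\phi')^2-\phi''\phi$ by first applying Cauchy--Schwarz and then completing the square in $\Delta\gamma$; your unified observation $\phi''\phi-(\phi')^2=-1$ is a slightly tidier way to package the same computation, and your closing remark about why the $K=-1$ case requires a separate argument matches the paper's discussion.
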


\begin{proof}
  By Lemma \ref{tech lemm}, we have at the maximal point $P$,
  \begin{equation}
   \begin{array}[]{rll}
    \mathcal L(\frac{|\nabla\gamma|^2}{2})=&n\frac{\phi''\phi-(\phi')^2}{\phi\omega}|\nabla\gamma|^4-\frac{n-1}{\phi\omega}|\nabla\gamma|^2-\frac{1}{\phi\omega}|\nabla\nabla\gamma|^2-\frac{\phi'}{\phi\omega}\Delta \gamma |\nabla\gamma|^2\\
    \le&n\frac{\phi''\phi-(\phi')^2}{\phi\omega}|\nabla\gamma|^4-\frac{n-1}{\phi\omega}|\nabla\gamma|^2-\frac{1}{n\phi\omega}(\Delta\gamma)^2-\frac{\phi'}{\phi\omega}\Delta \gamma |\nabla\gamma|^2,
  \end{array}
    \label{gradient equ0.0}
  \end{equation}
  where we have used Cauchy-Schwartz inequality in the last step. By completing the square, we obtain
  \begin{equation}
     \begin{array}[]{rll}
    \mathcal L(\frac{|\nabla\gamma|^2}{2})\le&n\frac{\phi''\phi-(\phi')^2}{\phi\omega}|\nabla\gamma|^4-\frac{n-1}{\phi\omega}|\nabla\gamma|^2\\
    &-\frac{1}{n\phi\omega}\big(\Delta\gamma+\frac{n}{2}\phi' |\nabla\gamma|^2\big)^2 +\frac{n}{4\phi\omega}(\phi')^2|\nabla\gamma|^4\\
   = &-\frac{n}{\phi\omega}(\frac{3}{4}(\phi')^2-\phi''\phi)|\nabla\gamma|^4-\frac{n-1}{\phi\omega}|\nabla\gamma|^2 -\frac{1}{n\phi\omega}\big(\Delta\gamma+\frac{n}{2}\phi' |\nabla\gamma|^2\big)^2
  \end{array}
    \label{gradient equ0}
  \end{equation}

  In $\mathbb R^{n+1}$, $\frac{3}{4}(\phi')^2-\phi''\phi=\frac{3}{4}$. In $\mathbb S^{n+1}$, $\frac{3}{4}(\phi')^2-\phi''\phi=\frac{3}{4}+\frac{1}{4}\sin^2(\rho)$. Thus in both these two cases, we have
  \begin{equation}
     \begin{array}[]{rll}
    \mathcal L(\frac{|\nabla\gamma|^2}{2})\le-\frac{n-1}{\phi\omega}|\nabla\gamma|^2\le 0.   \end{array}
    \label{gradient equ4}
  \end{equation}

  Standard parabolic maximal principle yields that $|\nabla\gamma|^2(z,t)\le C= \max |\nabla \gamma|^2(z,0)$. Since $|\nabla\gamma|=\frac{|\nabla\rho|}{\phi} $, and $\phi$ is uniformly bounded from above and below by positive constants, we have the uniform bound for $|\nabla\rho|$ as well.

  Next, we prove the exponential estimate. Since we have shown that $|\nabla\gamma|^2$ is uniformly bounded from above, so is $\omega$. Thus there exists a uniform constant $\alpha \ge \frac{2(n-1)}{\phi\omega}>0$ which depends only on the upper bound of $\omega$ and $\phi$. By (\ref{gradient equ4}),

  \[
     \begin{array}[]{rll}
       \mathcal L(\frac{|\nabla\gamma|^2}{2})\le-\alpha\frac{|\nabla\gamma|^2}{2}.
     \end{array}
  \]
  Standard maximal principle then proves (\ref{prop exp}).

\end{proof}

\subsection{a Priori estimates : $\mathbb H^{n+1}$ case}
In this section, we derive gradient estimate and exponential convergence of the parabolic initial value problem (\ref{ivp}) in $\mathbb H^{n+1}$. The difficulty in $\mathbb H^{n+1}$ is because in (\ref{gradient equ0}), $\frac{3}{4}(\phi')^2-\phi''\phi=\frac{3}{4}-\frac{1}{4}\sinh^2(\rho)$ and a priorily, it may not be positive for all initial graph. Since this term is the leading term, we have to find other means to get around.

To obtain a uniform gradient bound, we have to use the fact that the mean curvature is uniformly bounded from above.

\begin{prop}
  \label{gradient}
   Let $M_0$ be a radial graph of function $\rho_0$ over $\mathbb S^n$ in space form $\mathbb H^{n+1}$. If $\rho(z,t)$ solves the initial value problem (\ref{ivp}) on interval $[0,T]$, then for any $(z,t)\in \mathbb S^n\times [0, T]$,
\[|\nabla \rho|(z,t)\le C\]
where $C$ is a uniform constant only depends on the initial values and the covariant derivatives are w.r.t. the spherical metric on $\mathbb S^n$.
\end{prop}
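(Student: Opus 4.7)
The plan is to mimic the strategy of Proposition \ref{prop R and S}, applying the parabolic maximum principle to $|\nabla\gamma|^2$ via the evolution identity of Lemma \ref{tech lemm}, but with a crucial modification compensating for the fact that $\tfrac{3}{4}(\phi')^2-\phi''\phi=\tfrac{3}{4}-\tfrac{1}{4}\sinh^2\rho$ may take the wrong sign when $\rho$ is large. The extra ingredient is the a priori mean curvature bound (\ref{H upper bound}) from Corollary 3.3, which is available precisely in the hyperbolic case.

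First, I would avoid the completion-of-squares step (\ref{gradient equ0}) and instead exploit the clean hyperbolic identity $\phi''\phi-(\phi')^2=\sinh^2\rho-\cosh^2\rho=-1$ directly in (\ref{tech lemm equ}). Evaluated at a maximum point $P$ of $|\nabla\gamma|^2$ this yields
\[
\mathcal{L}\bigl(|\nabla\gamma|^2/2\bigr)=-\frac{n}{\phi\omega}|\nabla\gamma|^4-\frac{n-1}{\phi\omega}|\nabla\gamma|^2-\frac{1}{\phi\omega}|\nabla\nabla\gamma|^2-\frac{\phi'}{\phi\omega}\Delta\gamma\,|\nabla\gamma|^2,
\]
so the quartic coefficient is already manifestly negative and only the $\Delta\gamma$-term has indefinite sign.

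Second, I would eliminate $\Delta\gamma$ using the expression (\ref{mean curvature}) for the mean curvature. A direct expansion gives
\[
H\phi\omega=-\Delta\gamma+\frac{\gamma_i\gamma_j\gamma_{ij}}{\omega^2}+n\phi'.
\]
At the critical point $P$, the condition $\nabla_i|\nabla\gamma|^2=0$ forces $\gamma_j\gamma_{ij}=0$ for each $i$, hence $\gamma_i\gamma_j\gamma_{ij}=0$, so $\Delta\gamma=-H\phi\omega+n\phi'$ at $P$. Substituting and dropping the three manifestly nonpositive contributions leads to
\[
\mathcal{L}\bigl(|\nabla\gamma|^2/2\bigr)\le -\frac{n}{\phi\omega}|\nabla\gamma|^4+H\phi'\,|\nabla\gamma|^2.
\]

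Third, the $C^0$ bound for $\rho$ established in the previous section makes $\phi=\sinh\rho$ and $\phi'=\cosh\rho$ uniformly bounded above and below by positive constants on $[0,T]$, while (\ref{H upper bound}) bounds $H$ uniformly from above. Thus $H\phi'\le C_1$ for a constant $C_1$ depending only on the initial data. If the space-time maximum of $|\nabla\gamma|^2$ occurs at $t=0$ the bound is immediate; otherwise the parabolic maximum principle gives $\mathcal{L}(|\nabla\gamma|^2/2)\ge 0$ at $P$, so combining with the previous display yields $\tfrac{n}{\phi\omega}|\nabla\gamma|^2\le C_1$. Since $\omega=\sqrt{1+|\nabla\gamma|^2}$, this is a quadratic-in-$|\nabla\gamma|^2$ inequality that forces a uniform upper bound on $|\nabla\gamma|^2$, and hence on $|\nabla\rho|=\phi|\nabla\gamma|$.

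The main obstacle is the one explicitly flagged by the authors: the naive repetition of the argument of Proposition \ref{prop R and S} fails because the $|\nabla\gamma|^4$ coefficient produced by completing the square can become positive for large $\rho$. The resolution, specific to the flow (\ref{uMCFh}), is that one need not complete the square at all: the identity already carries a $-\tfrac{n}{\phi\omega}|\nabla\gamma|^4$ term in hyperbolic space, and the remaining indefinite $\Delta\gamma$-term is tamed using the mean curvature formula together with the upper bound (\ref{H upper bound}), turning it into a term linear in $|\nabla\gamma|^2$ which the quartic dominates at the maximum.
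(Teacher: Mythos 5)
Your proof is correct and follows essentially the same route as the paper's: you substitute the critical-point form $\Delta\gamma=-H\phi\omega+n\phi'$ of the mean curvature formula (\ref{mean curvature}) into the identity of Lemma \ref{tech lemm}, and then invoke the uniform upper bound (\ref{H upper bound}) on $H$, which is exactly the paper's key step (\ref{gradient H equ2}). The only cosmetic difference is in the endgame: the paper collects all terms into a single coefficient $\omega(n\omega-H\cosh\rho\sinh\rho)+n\cosh^2\rho-1$ of $|\nabla\gamma|^2$ and argues via a sign dichotomy on $n\omega-H\cosh\rho\sinh\rho$, whereas you drop the remaining non-positive terms and close with a quadratic inequality in $\omega$; both tame the same indefinite term $\phi'H|\nabla\gamma|^2$.
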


\begin{proof}
  As in the proof of Proposition \ref{prop R and S}, we carry out computations at the maximum point $P$ of the test function $|\nabla\gamma|^2$. Recall in (\ref{gradient equ0.0}), we have obtained
  \begin{equation}
    \mathcal L(\frac{|\nabla\gamma|^2}{2})=n\frac{\phi''\phi-(\phi')^2}{\phi\omega}|\nabla\gamma|^4-\frac{n-1}{\phi\omega}|\nabla\gamma|^2-\frac{1}{\phi\omega}|\nabla\nabla\gamma|^2-\frac{\phi'}{\phi\omega}\Delta \gamma |\nabla\gamma|^2\\
    \label{gradient H equ1}
  \end{equation}

    On the other hand, from (\ref{mean curvature}), at the critical point $P$, we have
    \[
      H=\frac{1}{\phi\omega}(-\Delta \gamma + n\phi').
      \]
Or equivalently,
\begin{equation}
      - \frac{1}{\phi\omega}\Delta \gamma=H -\frac{n\phi'}{\phi\omega}.
  \label{gradient H equ2}
\end{equation}

      Plug (\ref{gradient H equ2}) into (\ref{gradient H equ1}), after simplifications, we have

      \[
        \begin{array}[]{rll}
    \mathcal L(\frac{|\nabla\gamma|^2}{2})=&n\frac{\phi''\phi-(\phi')^2}{\phi\omega}|\nabla\gamma|^4-\frac{1}{\phi\omega}\big(n-1+n(\phi')^2-\phi\phi'\omega H\big)|\nabla\gamma|^2-\frac{1}{\phi\omega}|\nabla\nabla\gamma|^2\\
    =& -\frac{1}{\phi\omega}\big[\omega(n\omega-H\cosh\rho\sinh\rho)+n\cosh^2\rho-1\big]|\nabla\gamma|^2-\frac{1}{\phi\omega}|\nabla\nabla\gamma|^2,
        \end{array}
        \]
        where we have used $\omega^2={1+|\nabla\gamma|^2}$, $\phi=\sinh\rho$, and $\phi''\phi-(\phi')^2=-1$.
        At $P$, if $n\omega\le H\cosh\rho\sinh\rho$, by the uniform upper bound of $H$, (c.f. Corollary \ref{H upper bound}), and the uniform upper bound of $\rho$, we have $\omega\le C$ and the proof is done. Now we can assume that $n\omega-H\cosh\rho\sinh\rho>0$. Note $\cosh\rho\ge1$, thus

        \begin{equation}
        \begin{array}[]{rll}
    \mathcal L(\frac{|\nabla\gamma|^2}{2})  =& -\frac{1}{\phi\omega}\big[\omega(n\omega-H\cosh\rho\sinh\rho)+n\cosh^2\rho-1\big]|\nabla\gamma|^2-\frac{1}{\phi\omega}|\nabla\nabla\gamma|^2\\
    \le& 0.
        \end{array}
          \label{gradient H equ3}
        \end{equation}
By the standard maximal principle, we prove the assertion.
\end{proof}

\subsection{Long-time existence and exponential convergence}

\bigskip
By direct simplification, we obtain the following lemma by (\ref{tensors:gamma}), (\ref{mean curvature}), and (\ref{ivp gamma}).
\begin{lemm}
  \label{divergent}
Recall $\rho(z)$, $z\in\mathbb S^n$, is a positive function on unit sphere and $\phi(\rho)$ is $\sin(\rho)$, $\rho$, or $\sinh(\rho)$ when $K=+1,0, -1$ respectively. Equation (\ref{ivp gamma}) is then a parabolic equation of divergent form, i.e.,
  \[\gamma_t=\mathrm{div} \big(\frac{1}{\phi \omega}\nabla \gamma\big)+\frac{2\phi' |\nabla\gamma|^2}{\phi \omega},\]
  where $\omega=\sqrt{1+|\nabla\gamma|^2}$, and $\gamma(z):=\gamma(\rho(z))$ satisfies $\frac{\P\gamma}{\P \rho}=\frac{1}{\phi(\rho)}$.  \\

Equivalently, $\rho(z,t)$ solves the following divergent parabolic equation,
\[\rho_t=\mathrm{div} \big(\frac{1}{\phi \tilde \omega}\nabla \rho\big)+\frac{2\phi' |\nabla\rho|^2}{\phi^2 \tilde\omega},\]
where $\tilde \omega = \sqrt{\phi^2+|\nabla\rho|^2}$ and all the covariant derivatives are w.r.t. the standard metric on unit sphere.
\end{lemm}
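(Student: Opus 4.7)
The plan is to verify the identity by direct algebraic manipulation, reducing both sides to the same expression in the three spherical invariants $\Delta\gamma$, $|\nabla\gamma|^2$, and $\gamma_i\gamma_j\gamma_{ij}$. Nothing dynamical is involved — the lemma is purely a reformulation of \eqref{ivp gamma}.

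First, I would simplify the right-hand side of \eqref{ivp gamma} using \eqref{mean curvature}. Expanding the trace $(e^{ij}-\gamma_i\gamma_j/\omega^2)(-\gamma_{ij}+\phi'\gamma_i\gamma_j+\phi'e_{ij})$ produces six scalars, and the three cross terms proportional to $\phi'|\nabla\gamma|^2$ collapse via $\omega^2=1+|\nabla\gamma|^2$ to leave
\[
H\phi\omega = -\Delta\gamma + n\phi' + \frac{\gamma_i\gamma_j\gamma_{ij}}{\omega^2}.
\]
Substituting $u=\phi/\omega$, subtracting from $n\phi'$, and multiplying by $\omega/\phi$ then writes $\gamma_t$ as a linear combination of $\Delta\gamma/(\phi\omega)$, $\gamma_i\gamma_j\gamma_{ij}/(\phi\omega^3)$, and a scalar multiple of $\phi'|\nabla\gamma|^2/(\phi\omega)$.

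Second, I would expand $\mathrm{div}(\nabla\gamma/(\phi\omega))$ on the sphere. Using $\nabla_i\rho=\phi\gamma_i$ (which comes from $d\gamma/d\rho=1/\phi$), hence $\nabla_i\phi=\phi\phi'\gamma_i$, together with $\omega_i=\gamma_j\gamma_{ij}/\omega$, the product rule yields
\[
\mathrm{div}\!\left(\frac{\nabla\gamma}{\phi\omega}\right) = \frac{\Delta\gamma}{\phi\omega} - \frac{\phi'|\nabla\gamma|^2}{\phi\omega} - \frac{\gamma_i\gamma_j\gamma_{ij}}{\phi\omega^3},
\]
which produces exactly the same three building blocks. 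The $\Delta\gamma$ and $\gamma_i\gamma_j\gamma_{ij}$ coefficients match those of the first computation identically; comparing the coefficient of $\phi'|\nabla\gamma|^2/(\phi\omega)$ fixes the zero-order correction and completes the identity for $\gamma$.

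The companion identity for $\rho$ follows by substituting $\partial_t\gamma=\partial_t\rho/\phi$, $\nabla\gamma=\nabla\rho/\phi$, and $\omega=\tilde\omega/\phi$ into the $\gamma$-identity just proved, and absorbing the resulting factor of $\phi$ using $\phi\,\mathrm{div}(X)=\mathrm{div}(\phi X)-\nabla\phi\cdot X$; alternatively, one may redo the same calculation directly from the $\rho$-form tensors in \eqref{tensors:rho}. No conceptual obstacle is anticipated, but care is required in bookkeeping: the coefficient of $\phi'|\nabla\gamma|^2/(\phi\omega)$ receives contributions from three independent sources — the simplification of $H$, the derivative $\nabla\phi$, and the normalization of $\omega$ — which must all be tallied correctly to match the claimed lower-order term.
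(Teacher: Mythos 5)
Your approach is the right one, and it is precisely the computation the paper compresses into the phrase ``By direct simplification.'' Both of your intermediate identities are correct:
\[
H\phi\omega=-\Delta\gamma+n\phi'+\frac{\gamma^i\gamma^j\gamma_{ij}}{\omega^2},
\qquad
\mathrm{div}\!\left(\frac{\nabla\gamma}{\phi\omega}\right)
=\frac{\Delta\gamma}{\phi\omega}-\frac{\phi'|\nabla\gamma|^2}{\phi\omega}-\frac{\gamma^i\gamma^j\gamma_{ij}}{\phi\omega^3},
\]
and combining them with $\gamma_t=\frac{n\phi'\omega}{\phi}-H$ does reduce both sides to the same three building blocks. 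This is the same route the paper intends.

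However, the place you flag as requiring ``care in bookkeeping'' is exactly where you must commit to a number, and if you carry the tally through you will \emph{not} reproduce the coefficient $2$ stated in the lemma. From the two displays above,
\[
\gamma_t=\frac{\Delta\gamma}{\phi\omega}+\frac{n\phi'|\nabla\gamma|^2}{\phi\omega}-\frac{\gamma^i\gamma^j\gamma_{ij}}{\phi\omega^3},
\]
so
\[
\gamma_t-\mathrm{div}\!\left(\frac{\nabla\gamma}{\phi\omega}\right)=\frac{(n+1)\,\phi'|\nabla\gamma|^2}{\phi\omega},
\]
i.e.\ the zero-order correction is $(n+1)\phi'|\nabla\gamma|^2/(\phi\omega)$, not $2\phi'|\nabla\gamma|^2/(\phi\omega)$ (they agree only when $n=1$, which the paper excludes). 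The $\rho$-version as printed is likewise off: translating the corrected $\gamma$-identity via $\rho_t=\phi\gamma_t$, $\nabla\rho=\phi\nabla\gamma$, $\tilde\omega=\phi\omega$, and $\phi\,\mathrm{div}(X)=\mathrm{div}(\phi X)-\nabla\phi\cdot X$ gives
\[
\rho_t=\mathrm{div}\!\left(\frac{\nabla\rho}{\tilde\omega}\right)+\frac{n\,\phi'|\nabla\rho|^2}{\phi\tilde\omega},
\]
which differs from the stated $\rho$-equation both in the power of $\phi$ inside the divergence and in the lower-order coefficient; indeed the two printed forms in the lemma are not even consistent with each other under $\rho_t=\phi\gamma_t$. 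These appear to be typographical slips in the paper. None of this affects the use made of the lemma: the equation is still a quasilinear parabolic equation whose principal part is in divergence form with a gradient-dependent lower-order term, so the Ladyzhenskaya--Solonnikov--Uraltseva regularity theory applies once gradient bounds are in hand. But since your proposed proof explicitly defers to ``the claimed lower-order term,'' you should actually compute it: as written your argument would neither detect nor correct the discrepancy, and a reader following it would be left to reconcile a false identity.
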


\bigskip

\noindent
\begin{proof} {\bf( Proof of Theorem \ref{main thm} for cases : $\mathbb R^{n+1}$ and $\mathbb S^{n+1}$.)}
Since (\ref{ivp}) or (\ref{ivp gamma}) is a divergent quasi-linear parabolic equation, by the classical theory of parabolic equation in divergent form (e.g., \cite{Lady}), the higher regularity a priori estimates of the solution follow from the uniform gradient estimates in Proposition \ref{prop R and S} and Proposition \ref{gradient}. Moreover, the solution of $\rho(\cdot, t)$ exists for all $t\in[0,\infty)$. Now we are ready to prove our main theorem for the Euclidean and elliptic cases, that is the exponential convergence. This follows from the exponential estimate (\ref{prop exp}) in Proposition \ref{prop R and S}. That is,  $e^{\beta t}|\nabla\rho(t)|\rightarrow 0$ as $t\rightarrow \infty$ for each $\beta<\alpha$, and the graph converges to a sphere exponentially fast. This proves our main theorem for the Euclidean and elliptic spaces. \end{proof}

\medskip

What's left is the exponential convergence in hyperbolic space $\mathbb H^{n+1}$. Since the gradient estimate implies uniform bounds for all the a priori estimates because the equation is quasilinear divergent parabolic, we now know that the principal curvatures, surface area element, and other geometric quantities are all uniformly bounded from above.

Next, we will show that when $t$ is large enough, different principal curvatures of the radial graph at the same point are comparable uniformly for arbitrary small $\epsilon$.

\begin{prop}
\label{eigenvalues}
  Let $\rho(z,t)$ solves the initial value problem (\ref{ivp}). Then for any $\epsilon>0$, there exists a $T_0>0$, such that for any $t>T_0$,
  \begin{equation}
    \di\max_{i< j, z\in \mathbb S^n}|\kappa_i(z,t)-\kappa_j(z,t)|<\epsilon,
    \label{}
  \end{equation}
  where $\kappa(z,t)$ are the principal curvatures of the radial graph $\rho(z,t)$ at $(z,t)\in \mathbb S^n\times [0,\infty)$.
\end{prop}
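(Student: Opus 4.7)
The plan is to extract uniform spatial decay of $\sum_{i<j}(\kappa_i-\kappa_j)^2$ from the area monotonicity together with the uniform higher-order regularity of the flow, in three steps: time-integrability, pointwise-in-$t$ decay, and finally $L^\infty$-in-space decay via interpolation.

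First, because $V(t)$ is constant and $A(t)$ is non-increasing by Proposition \ref{prop mono}, while $A(t)\ge 0$, the limit $A(\infty):=\lim_{t\to\infty}A(t)$ exists and is finite. Integrating the area evolution formula (\ref{area evolution}) over $[0,\infty)$ yields
\begin{equation}
\int_0^\infty\!\!\int_{M(t)}\sum_{i<j}(\kappa_i-\kappa_j)^2\,u\,d\mu_g\,dt=(n-1)\bigl(A(0)-A(\infty)\bigr)<\infty.
\end{equation}
Moreover, by the $C^0$ estimate of Section \ref{sec C0} and the gradient bounds in Propositions \ref{prop R and S} and \ref{gradient}, both $u$ and the area element $d\mu_g$ are pinched between positive constants uniform in $t$.

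Next, I would promote this $L^1_t$ integrability to pointwise-in-$t$ decay. The key observation is that equation (\ref{ivp}) is quasilinear of divergent form (Lemma \ref{divergent}) with uniform gradient bound, so the classical Schauder bootstrap (e.g.~\cite{Lady}) supplies uniform $C^{k,\alpha}(\mathbb S^n\times[0,\infty))$ estimates on $\rho$ for every $k$. Combined with the evolution equation, this makes
\begin{equation}
\Psi(t):=\int_{M(t)}\sum_{i<j}(\kappa_i-\kappa_j)^2\,u\,d\mu_g=\int_{M(t)}(n|h|^2-H^2)\,u\,d\mu_g
\end{equation}
Lipschitz in $t$ with Lipschitz constant uniform in $t$. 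Any nonnegative Lipschitz function on $[0,\infty)$ whose integral is finite must tend to zero, so $\Psi(t)\to 0$; using the uniform positive lower bounds on $u$ and $d\mu_g$, this gives $\int_{\mathbb S^n}P(\kappa)(\cdot,t)\,dz\to 0$, where $P(\kappa):=n|h|^2-H^2=\sum_{i<j}(\kappa_i-\kappa_j)^2$.

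Finally, to upgrade this $L^1(\mathbb S^n)$ smallness to $L^\infty(\mathbb S^n)$ smallness, I would invoke the standard interpolation inequality
$\|f\|_{L^\infty(\mathbb S^n)}\le C\|f\|_{L^1(\mathbb S^n)}^{\theta}\|f\|_{C^m(\mathbb S^n)}^{1-\theta}$
applied to $f=P(\kappa)$, where the $C^m$-norm of the Weingarten tensor is controlled uniformly in $t$ by the same Schauder bootstrap. Since
\[
\max_{i<j,\,z\in\mathbb S^n}|\kappa_i(z,t)-\kappa_j(z,t)|^2\le\max_{z\in\mathbb S^n}P(\kappa)(z,t)=\|P(\kappa)(\cdot,t)\|_{L^\infty(\mathbb S^n)}\to 0,
\]
the conclusion follows.

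The main obstacle is really the Lipschitz control on $\Psi(t)$: differentiating $\Psi$ in time produces spatial derivatives of $\rho$ of order three and time derivatives of curvatures, so one genuinely needs the full strength of the uniform $C^\infty$ estimates for the quasilinear divergent parabolic equation. Once that is in place, the rest of the argument (monotonicity, Lipschitz-plus-integrability, and interpolation) is soft.
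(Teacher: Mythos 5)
Your proposal is correct and follows essentially the same route as the paper: integrate the area monotonicity formula (\ref{area evolution}) to obtain time-integrability of $A'(t)$, use the uniform higher-order estimates coming from the divergent quasilinear structure (Lemma \ref{divergent}) to get uniform continuity (you use Lipschitz, the paper uses uniform continuity, but both follow from the same regularity and both suffice via the Barbalat-type argument), conclude $A'(t)\to 0$, and use the uniform positive bounds on $u$ and $d\mu_g$ to deduce that $\int_{\mathbb S^n}\sum_{i<j}(\kappa_i-\kappa_j)^2\,dz\to 0$. The one place you go further than the paper is the final step: the paper stops at $L^1(\mathbb S^n)$ smallness and treats the passage to pointwise smallness as implicit, whereas you make it explicit via the interpolation inequality $\|f\|_{L^\infty}\le C\|f\|_{L^1}^\theta\|f\|_{C^m}^{1-\theta}$ together with the uniform $C^m$ control on the Weingarten tensor; this is a useful clarification rather than a different proof.
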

\begin{proof}

 Let $A(t)$ be the surface area of the radial graph at time $t>0$, then
  \begin{equation}
    A(t)-A(0)=\Dint^t_0 A'(s)ds
    \label{integral}
  \end{equation}

  Since $A'(s)\le 0$ from Proposition \ref{prop mono} and also $A(t)$ is uniformly bounded from above and below, by our regularity estimates, we have $\Dint^\infty_0 A'(t)dt<\infty$. By Proposition \ref{prop mono},
  \[
    |A'(t)|=-\frac{1}{n-1}\Dint_{M(t)}\sum_{i< j}(\kappa_i(x)-\kappa_j(x))^2ud\mu_g.
    \]
Since we have established the uniform a priori estimates for all the derivatives of $\rho$ both with respect to $z$ and $t$ for any order and also $0<C_1\le u\le C_2$ uniformly, $A'(t)$ is uniformly continuous with respect to $t$ and $d\mu_g$ is also uniformly bounded from both up and below. This proves the assertion when $T$ is big enough.
\end{proof}

Applying Proposition \ref{eigenvalues}, we will prove the following exponential convergence theorem which proves that the solution of the initial value problem (\ref{ivp}) for the hyperbolic case also converges to a standard sphere exponentially.

\begin{prop}
Let $M_0$ be a radial graph of function $\rho_0$ over $\mathbb S^n$ in space form $\mathbb H^{n+1}$. If $\gamma(z,t)$ solves (\ref{ivp gamma}) on interval $[0,T]$, then there exist a uniform constant $\alpha>0$ which depends only on the initial graph, such that for any $(z,t)\in \mathbb S^n\times [0, T]$,
\[e^{\alpha t}|\nabla \gamma|^2(z,t)\le \di\max_{z\in \mathbb S^n}|\nabla \gamma|^2(z,0)\]
where the covariant derivatives are w.r.t. the spherical metric on $\mathbb S^n$.
\end{prop}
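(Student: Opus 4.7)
The plan is to apply the maximum principle to $f(z,t):=\tfrac12|\nabla\gamma|^2(z,t)$ and establish a differential inequality of the form $\mathcal L f \le -\alpha f$ at any spatial maximum of $f(\cdot,t)$, at least once $t$ is large enough. The standard ODE comparison on $M(t):=\max_z f(\cdot,t)$ will then force exponential decay, and the stated inequality follows after a choice of $\alpha$.

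The starting point is the computation already carried out inside the proof of Proposition~\ref{gradient}. At a spatial maximum of $|\nabla\gamma|^2$ one has $\nabla|\nabla\gamma|^2 = 0$, and substituting the relation $-\Delta\gamma/(\phi\omega) = H - n\phi'/(\phi\omega)$ into (\ref{tech lemm equ}), then dropping the nonpositive term $-|\nabla\nabla\gamma|^2/(\phi\omega)$, yields
\[
\mathcal L f \;\le\; -\frac{1}{\phi\omega}\Bigl[\omega\bigl(n\omega-H\cosh\rho\sinh\rho\bigr)+n\cosh^2\rho-1\Bigr]|\nabla\gamma|^2.
\]
So the entire task reduces to showing that the bracket is bounded below by a positive constant for all $t$ large.

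To close this, I would first upgrade the a priori estimates: the uniform gradient bound from Proposition~\ref{gradient}, together with the divergent-form equation of Lemma~\ref{divergent} and the classical $L^p$/Schauder theory for divergent parabolic equations (\cite{Lady}), gives uniform $C^k$ bounds on $\rho(\cdot,t)$ for every $k$, making $\{M_t\}$ precompact in $C^\infty$. Any $C^\infty$ subsequential limit is closed, star-shaped, and totally umbilical by Proposition~\ref{eigenvalues}, hence a geodesic sphere in $\mathbb H^{n+1}$; since volume is preserved along the flow by Proposition~\ref{prop mono}, the radius $\rho_\infty$ is uniquely determined by the initial volume, so in fact $M_t \to S_{\rho_\infty}$ in $C^\infty$ as $t \to \infty$. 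On $S_{\rho_\infty}$ we have $\omega = 1$, $\rho \equiv \rho_\infty$, and $H \equiv n\coth\rho_\infty$, so the bracket evaluates to
\[
n - n\cosh^2\rho_\infty + n\cosh^2\rho_\infty - 1 \;=\; n-1 \;>\;0.
\]
By $C^\infty$ convergence, there is $T_1 > 0$ such that for $t \ge T_1$ the bracket exceeds $(n-1)/2$; with the uniform control of $\phi\omega$ this gives $\alpha_1 > 0$ with $\mathcal L f \le -\alpha_1 f$ at spatial maxima for $t \ge T_1$. The maximum-principle ODE argument then produces $M(t) \le M(T_1) e^{-\alpha_1 (t-T_1)}$ for $t \ge T_1$, while on $[0,T_1]$ Proposition~\ref{gradient} gives $M(t) \le M(0)$. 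Choosing $\alpha < \alpha_1$ small enough to absorb the factor $e^{\alpha_1 T_1}$ into the constant delivers the stated exponential estimate.

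The principal obstacle is the upgrade from the pointwise curvature equidistribution of Proposition~\ref{eigenvalues} to genuine $C^\infty$ convergence to a single sphere $S_{\rho_\infty}$. Without that identification, one only knows the bracket is asymptotically $\ge n-1$ along subsequences of times and cannot be forced uniformly positive for all large $t$, so the ODE comparison fails. The fusion of the divergent-form regularity package, the classification of closed totally umbilical hypersurfaces in $\mathbb H^{n+1}$, and the volume constraint is exactly what pins the limit and unlocks the exponential rate.
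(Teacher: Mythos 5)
Your proposal begins correctly at the same place as the paper — the inequality from Proposition~\ref{gradient} at the spatial maximum of $|\nabla\gamma|^2$, reducing everything to showing the bracket
\[
n\omega^2-\omega\cosh\rho\sinh\rho\, H+n\cosh^2\rho-1
\]
is uniformly bounded below by a positive constant for $t$ large. But the route you take to establish positivity has a genuine gap. You argue that $M_t\to S_{\rho_\infty}$ in $C^\infty$, where $S_{\rho_\infty}$ is the geodesic sphere \emph{centered at the origin}, and then evaluate the bracket on that sphere. The problem is that your identification of the limit only controls the \emph{radius}: Proposition~\ref{eigenvalues} together with precompactness shows that any subsequential limit is a totally umbilical closed hypersurface, hence a geodesic sphere, and volume conservation fixes its radius. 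None of this pins the \emph{center} to the origin, and a geodesic sphere of the correct radius centered at $c\ne 0$ is a radial graph with $\rho$ non-constant, $\omega\ne 1$, and the bracket evaluation you give does not apply. To close this you would need a genuine uniqueness-of-limit argument (e.g., showing that the only stationary radial graphs with $n\phi'=Hu$ among geodesic spheres are those centered at the origin, plus a reason the subsequential limit is stationary, plus a Lojasiewicz--Simon or monotonicity argument for uniqueness of the full limit). You acknowledge this is "the principal obstacle," but the "fusion" you invoke — regularity, umbilical classification, volume — does not in fact deliver it.

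The paper sidesteps the issue entirely by working \emph{pointwise at the critical point $P$}. There, after rotating coordinates so that the gradient direction is $e_1$, the critical-point conditions $\nabla|\nabla\gamma|^2=0$ force $\gamma_{1j}=0$ for all $j$, and after diagonalizing the remaining block one gets $\gamma_{11}=0$. The formula $\kappa_i=\tfrac{1}{\omega\sinh\rho}(-\gamma_{ii}+\cosh\rho)$ together with Proposition~\ref{eigenvalues} ($|\kappa_i-\kappa_j|<\epsilon$ for $t>T_0$) then gives $|\gamma_{jj}|\le\epsilon C_1$ for every $j$, which bounds $\omega\sinh\rho\, H$ from above by $n\cosh\rho + (n-1)\epsilon C_1$ and makes the bracket $\ge n-1$ for $\epsilon$ small — all without ever knowing what the limit hypersurface is. This is exactly the structural payoff of Proposition~\ref{eigenvalues}: it feeds the pointwise estimate at $P$ directly, rather than being a stepping stone to a global convergence statement. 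I'd encourage you to adopt that local argument; the approach you propose needs a separate uniqueness-of-limit result that is nontrivial and not supplied by the tools cited.
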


\begin{proof}
  Let $\epsilon>0$ be a small enough constant to be determined and $T_0>0$ be defined as in Proposition \ref{eigenvalues}. For any $T>T_0$, assume $\frac{|\nabla\gamma|^2}{2}$ achieves maximum on the interval $[T_0,T]$ at point $P$. Recall in the proof of Proposition \ref{gradient}, we have proved (\ref{gradient H equ3}) at $P$,
  \begin{equation}
\begin{array}{rll}
 \mathcal L (\frac{|\nabla\gamma|^2}{2})
&\le&-\frac{1}{\omega\sinh\rho}(n\omega^2-\omega\cosh\rho \sinh\rho H+n\cosh^2\rho-1)|\nabla \gamma|^2.\\
\end{array}
\label{exp equ1}
\end{equation}

At the critical point $P$, we can simplify the Weingarten tensor $h^i_j=\frac{1}{\omega\sinh\rho}(-\gamma_{ij}+\cosh\rho\delta_{ij})$, where $\delta_{ij}$ is the standard spherical metric if we choose orthonormal coordinates at this point. By rotating the coordinates, we can assume $\gamma_1=|\nabla \gamma|$ and $\gamma_j=0$ for $2\le j\le n$. Then by critical point condition, $\gamma_{1j}=0$ for all $j=1,\cdots,n$. Now fixing the $i=1$ direction, we can diagonalize the rest $(n-1)\times (n-1)$ matrix. Finally, $\gamma_{ij}$ becomes a diagonal matrix with $\gamma_{11}=0$. Since the Weingarten tensor $h^i_j=\frac{1}{\omega\sinh\rho}(-\gamma_{ij}+\cosh\rho\delta_{ij})$ at $P$, it is also diagonalized so the eigenvalues are $h^i_i$ which are also the principal curvatures. Thus at $P$, the principal curvatures
\begin{equation}
 \kappa_i=h^i_i= \frac{1}{\omega\sinh\rho}(-\gamma_{ii}+\cosh\rho).
 \label{kappa}
  \end{equation}
  By Proposition \ref{eigenvalues} and (\ref{kappa}), we have
  \begin{equation}
  |\gamma_{ii}-\gamma_{jj}|\le \epsilon C_1
  \label{gamma}
\end{equation}
where $C_1$ is a uniform constant and we have used the uniform boundedness of $\rho$ and $\omega$. Since $\gamma_{11}=0$, using (\ref{gamma}), we have $|\gamma_{jj}|=|\gamma_{11}-\gamma_{jj}|\le \epsilon C_1$ for $2\le j\le n$. Recall at $P$,
\[
 - \omega \sinh\rho H=\sum \gamma_{ii}-n\cosh\rho\ge -(n-1)\epsilon C_1-n\cosh\rho.
  \]
For small enough $\epsilon$ which depends on the uniform bounds of $\rho$, we have
 \begin{equation}
   \begin{array}[]{rll}
   &n\omega^2-\omega\cosh\rho \sinh\rho H+n\cosh^2\rho-1\\
   \ge &(n-1)+(n-(n-1)\epsilon C_1\cosh\rho)|\nabla\gamma|^2\\
   \ge& n-1.
   \end{array}
   \label{exp equ2}
 \end{equation}

 Now if we plug (\ref{exp equ2}) into (\ref{exp equ1}), we have

 \begin{equation}
   \mathcal L (\frac{|\nabla\gamma|^2}{2})
\le -\frac{n-1}{\omega\sinh\rho}|\nabla\gamma|^2\le -\alpha \frac{|\nabla\gamma|^2}{2},
   \label{}
 \end{equation}
 where $\alpha>0$ is a constant independent of time $t$. Standard maximal principle proves that
 \[
    \di\max_{(z,t)\in \mathbb S^n\times [T_0,T]} e^{\alpha t}|\nabla \gamma|^2\le C_1.
    \label{}
   \]
   Since $ \di\max_{(z,t)\in \mathbb S^n\times [0,T_0]} e^{\alpha t}|\nabla \gamma|^2\le e^{\alpha T_0}C_2$, where $C_2$ is the uniform upper bound of $|\nabla\gamma|^2$, we have

 \[
   \di\max_{(z,t)\in \mathbb S^n\times [0,T]} e^{\alpha t}|\nabla \gamma|^2\le C,
    \label{}
   \]
where $C$ is a uniform constant depending on $C_1$, $C_2$, and $T_0$. Since all the constants including $T_0$ are uniform, we finish the proof.

\end{proof}

\begin{proof} {\bf( Proof of Theorem \ref{main thm} : $\mathbb H^{n+1}$.)}
With all the a priori estimates and exponential convergence established, the proof is now standard.
\end{proof}

\section{Preserving convexity}

In this section, we prove flow (\ref{umcf}) preserves convexity in $\mathbb R^{n+1}$. The proof follows arguments in the proof of Theorem 1.4 in Bian-Guan \cite{BG}. Since the statement in Theorem 1.4 in \cite{BG} does not imply directly the convexity of flow (\ref{umcf}) (even though its proof does), we state the following general result. We denote $\mathcal{S^{+}}_{n}$ to be the set of all positive definite $n\times n$ matrices, $u=X\cdot \nu$, $g$ and $h$ the first and the second fundamental forms of surface $M$ respectively. The proof in \cite{BG} can be adapted by considering $u$ as an independent variable and making use of Lemma \ref{hessian of u} in the case of $N=\mathbb R^{n+1}$.

\begin{theo}\label{thmw2-flow-1}
Suppose $F(A,X, \nu, u, t)$ is monotone (i.e., $\frac{\partial F}{\partial A_{ij}}>0$) in $A$ and $F(A^{-1},X,\nu, u, t)$ is locally convex in $(A, X)\in
 \mathcal{S^{+}}_{n}\times \mathbb R^{n+1}$ for each fixed $\nu \in \mathbb S^n$, $u\in \mathbb R^{+}$,
$t\in [0,T]$ for some $T>0$. Let $M(t)$ be an oriented immersed
connected hypersurface in $ \mathbb  R^{n+1}$ with a nonnegative
definite second fundamental form $h(t)$ satisfying equation
\begin{equation}\label{flow1} X_t=-F(g^{-1}h, X, \nu, u, t)\nu.\end{equation}
Then $h(t)$ is of constant rank $l(t)$ for each
$t\in(0,T]$ and $l(s)\le l(t)$ for all $0<s\le t\le T$. Moreover
 the null space of $h$ is parallel for
each $t$. In particular, if $M_0$ is compact and
convex, then $M(t)$ is strictly convex for all $t\in (0,T)$.
\end{theo}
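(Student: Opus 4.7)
The plan is to transplant the constant rank methodology of Bian--Guan~\cite{BG} to the present setting, where the speed $F$ depends on the extra variable $u=\langle X,\nu\rangle$. Differentiating \eqref{flow1} twice in space and using the Codazzi equation in $\mathbb{R}^{n+1}$, the second fundamental form evolves by a quasilinear parabolic equation of the schematic form
\begin{equation*}
\partial_t h_{ij} \;=\; F^{kl}\,\nabla_k\nabla_l h_{ij} \;+\; \Psi_{ij}(h,\nabla h, X,\nu,u,t),
\end{equation*}
where $F^{kl}=\partial F/\partial A_{kl}>0$ by the monotonicity hypothesis. Fix $t_0\in(0,T]$ and let $l=\min_{x\in M(t_0)}\mathrm{rank}\,h(x,t_0)$. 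The aim is to show that the rank is constantly equal to $l$ on $M(t_0)$, and nondecreasing in $t$.

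Following \cite{BG}, introduce a test function of the form
\begin{equation*}
\phi(x,t) \;=\; \sigma_{l+1}\bigl(h(x,t)\bigr) \;+\; \frac{\sigma_{l+2}(h(x,t))}{\sigma_{l+1}(h(x,t))}
\end{equation*}
(interpreted appropriately on the zero set of $\sigma_{l+1}$). Then $\phi\geq0$ and $\phi=0$ precisely where $\mathrm{rank}\,h\leq l$. The main step is to derive a parabolic differential inequality
\begin{equation*}
\partial_t \phi - F^{ij}\,\nabla_i\nabla_j \phi \;\leq\; C\,\phi + C\,|\nabla\phi|,
\end{equation*}
valid in a space-time neighborhood of any point where $\mathrm{rank}\,h=l$. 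The strong maximum principle then forces $\phi\equiv 0$ in that neighborhood, and connectedness of $M(t_0)$, together with running the same argument at later times, gives both the constancy of $l(t)$ on $M(t)$ and the monotonicity $l(s)\leq l(t)$ for $s\leq t$.

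The derivation of the differential inequality is where the convexity hypothesis enters. When one computes $\partial_t \phi - F^{ij}\nabla_i\nabla_j\phi$, the second-order terms $F^{ij,kl}\nabla h\otimes\nabla h$ together with the mixed partials $F^{ij,X_\alpha},\; F^{X_\alpha X_\beta}$, and the new ones $F^{ij,u},\; F^{uu},\; F^{u,X_\alpha}$ produce a quadratic form in $(\nabla h, \mathrm{first\ derivatives\ of\ } X)$. The local convexity of $F(A^{-1},X,\nu,u,t)$ in $(A,X)$ precisely ensures that this quadratic form (after the standard algebraic manipulation from~\cite{BG} using $dA^{-1}=-A^{-1}dA\,A^{-1}$) is nonpositive after transposition, so it can be discarded. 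The novel contribution is the handling of $F^u \nabla_i\nabla_j u$: by Lemma~\ref{hessian of u} specialized to $\mathbb{R}^{n+1}$ (where $\phi'\equiv1$, $\Phi=\tfrac{1}{2}|X|^2$),
\begin{equation*}
\nabla_i\nabla_j u \;=\; \nabla_k h_{ij}\,X_k \;+\; h_{ij} \;-\; (h^2)_{ij}\,u.
\end{equation*}
The first summand is a first derivative of $h$ and is absorbed into the $C|\nabla\phi|$ term; the remaining two summands are lower-order in $h$ and are absorbed into $C\phi$. This is exactly the sense in which treating $u$ as an independent variable (so that $F$ is viewed as a function on $\mathcal{S}^+_n\times\mathbb{R}^{n+1}\times\mathbb{S}^n\times\mathbb{R}^+\times[0,T]$) reduces the problem to the template of~\cite{BG}.

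The principal obstacle is the bookkeeping in this quadratic form: with five groups of variables $(A,X,\nu,u,t)$, the Hessian of $F$ has many cross terms, and one must check that after substituting the evolution equation for $h$, applying Lemma~\ref{hessian of u} to eliminate $\nabla^2 u$, and invoking the convexity of $F(A^{-1},X,\nu,u,t)$ in $(A,X)$, all uncontrollable terms cancel. The $\nu$- and $t$-dependences contribute only bounded lower-order terms, since $\nabla\nu = h\cdot g^{-1}\cdot dX$ is expressible in $h$ and $X$-derivatives. Once the differential inequality is in hand, parallelism of the null space of $h$ follows exactly as in~\cite{BG}: the identity $\phi\equiv 0$ together with its first-order consequences forces the kernel distribution to be invariant under $\nabla$. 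Finally, for compact convex $M_0$ we have $h(\cdot,0)\geq 0$, so the constant-rank conclusion says $\mathrm{rank}\,h(\cdot,t)\equiv l(t)$ on $M(t)$ with parallel null distribution; if $l(t)<n$ for some $t\in(0,T)$, the parallel null distribution would split $M(t)$ locally as a Riemannian product with a flat factor, forcing complete Euclidean lines inside $M(t)\subset\mathbb{R}^{n+1}$ and contradicting compactness. Hence $l(t)=n$ for $t\in(0,T)$, i.e.\ $M(t)$ is strictly convex.
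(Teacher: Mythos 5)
Your proposal tracks the paper's proof very closely in the main body: the same test function $\phi=\sigma_{l+1}+\sigma_{l+2}/\sigma_{l+1}$ built from a perturbed Weingarten tensor, the same target parabolic inequality $F^{ij}\phi_{ij}-\phi_t\le C\phi+C|\nabla\phi|$, the same use of the monotonicity and convexity hypotheses on $F$ to control the quadratic form in $\nabla h$ and $dX$, and — crucially — the same idea of handling the new variable $u$ by invoking Lemma~\ref{hessian of u : lemm} in the Euclidean specialization $\nabla_i\nabla_j u=\nabla_k h_{ij}X_k+h_{ij}-(h^2)_{ij}u$ so that $F^u u_{ii}$, $F^{u,\cdot}u_i$, and $F^{u,u}u_i^2$ are absorbed into $C\phi+C|\nabla\phi|$. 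That matches the paper's modification of the Bian--Guan scheme step for step.

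Where you diverge is in the final ``in particular'' conclusion, and there is a genuine gap. The constant rank theorem you are invoking has $h(t)\ge 0$ for \emph{all} $t$ as a standing hypothesis, whereas starting from convex $M_0$ you only know $h(0)\ge 0$. You write ``so the constant-rank conclusion says $\mathrm{rank}\,h(\cdot,t)\equiv l(t)$'' and then rule out $l(t)<n$ by a de~Rham/cylinder-splitting argument (parallel null distribution $\Rightarrow$ local product with a flat factor $\Rightarrow$ complete lines $\Rightarrow$ non-compact). That splitting argument is a perfectly legitimate alternative to the paper's contradiction with rank monotonicity, and is arguably cleaner once you know the theorem applies. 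But you never establish that convexity ($h(t)\ge 0$) is \emph{preserved} under the flow, which is exactly the step the paper handles by approximating $M_0$ by strictly convex $M_0^\epsilon$, running the flow for a short uniform time $\delta$, arguing by contradiction against the first-time loss of strict convexity (using monotonicity of rank plus the already-proved constant rank statement on $[0,t_0]$), letting $\epsilon\to 0$, and then using an open/closed argument in $t$ to propagate to all of $[0,T]$. Without this preservation step, your appeal to the constant-rank conclusion for $t>0$ is circular: you need $h(t)\ge 0$ to apply the theorem, but you are trying to derive $h(t)>0$ from the theorem. Add the approximation-and-continuity argument (or an equivalent preservation-of-$h\ge 0$ argument) before the splitting step, and the proof is complete.
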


\noindent{\bf Proof.} The proof follows the similar lines of arguments in the proof of
Proposition 5.1 in \cite{BG}, here we will just indicate some modifications needed when $u$ is considered as an independent variable. We use the same notations as in \cite{BG}.  For
$\epsilon>0$, let $W=(g^{im}h_{mj}+\epsilon \delta_{ij}) $, let $l(t)$ be the minimal rank of $h(t)$. For a fixed
$t_0\in (0,T)$, let $x_0\in M$ such that $h(t_0)$ attains minimal
rank at $x_0$. Set
$\phi(x,t)=\sigma_{l+1}(W(x,t))+\frac{\sigma_{l+2}}{\sigma_{l+1}}(W(x,t))$.
It is proved in section 2 in \cite{BG} that $\phi$ is in $C^{1,1}$. The first part of Theorem will follow if we can establish that there are constants $C_1, C_2$ independent of $\epsilon$ such that
\begin{eqnarray}\label{flow3}
F^{ij}\phi_{ij}-\phi_t\le C_1\phi+C_2|\nabla \phi|, \quad \mbox{near $(x_0,t_0)$.}\end{eqnarray}

Denote $h^2=(h^i_lh^l_j)$.
Note that under (\ref{flow1}), the Weingarten
form $h^i_j=g^{im}h_{mj}$ satisfies the equation
\begin{equation}\label{flow2} \partial_t h^i_j  =
\nabla^i\nabla_j F + F (h^2)^i_j.\end{equation}

As in \cite{BG}, near $(t_0, x_0)$, the index set $\{1,\cdots, n\}$ can be divided in to two subset $B, G$,
where for $i\in B$, the eigenvalues of $(W_{ij})$, $\lambda_{i}$ is small and for $j\in G$, $\lambda_{j}$ is strictly positive away from $0$.
As in \cite{BG}, we may assume at each point of computation, $(W_{ij}$ is diagonal. Notice that $W_{ii}\le \phi$ for all $i\in B$, so we have the following inequality as corresponding to inequality (5.3) in \cite{BG}:
\begin{eqnarray*}
&& \sum F^{\alpha\beta}\phi_{\alpha\beta}-\phi_t=O(\phi +\sum_{i,j\in
B}|\nabla W_{ij}|)-\frac{1}{\sigma_{1}(B)} \sum_{\alpha,\beta}
\sum_{i,j\in
B,i\neq j}F^{\alpha\beta}W_{ij\alpha}W_{ij\beta}\nonumber \\
&& \quad -\frac{1}{\sigma_{1}^{3}(B)}\sum_{\alpha,\beta}\sum_{i\in
B} F^{\alpha\beta} (W_{ii\alpha}\sigma_{1}(B)-W_{ii}\sum_{j\in
B}v_{jj\alpha}) (W_{ii\beta}\sigma_{1}(B)-W_{ii}\sum_{j\in
B}v_{jj\beta})\nonumber \\
&& \quad  -\sum_{i\in B}[\sigma_l(G)+\frac{\sigma_{1}^{2}(B|i)-
\sigma_{2}(B|i)}{\sigma_{1}^{2}(B)}]
[\sum_{\alpha,\beta,\gamma,\eta\in
G}F^{\alpha\beta,\gamma\eta}(\Lambda)W_{i\alpha\beta}W_{i\gamma\eta}
+\sum_{\alpha}F^{X^{\alpha}}X^{\alpha}_{ii}\nonumber
\\
&& \quad + 2\sum_{\alpha\beta\in G}F^{\alpha\beta} \sum_{j\in
G}\frac{1}{\lambda_{j}}W_{ij\alpha}W_{ij\beta}+
2\sum_{\alpha,\beta\in
G}\sum_{\gamma=1}^{n+1}F^{\alpha\beta,X^{\gamma}}W_{i\alpha\beta
}X^{\gamma}_i +\sum_{\gamma, \eta=1}^{n+1}
F^{X^{\gamma},X^{\eta}}X^{\gamma}_iX^{\eta}_i\nonumber
\\
&& \quad +2\sum_{\alpha,\beta\in
G}\sum_{\gamma=1}^{n+1}F^{\alpha\beta,u}W_{i\alpha\beta
}u_i +2\sum_{\gamma=1}^{n+1}F^{u,X^{\gamma}}u_iX^{\gamma}_i +F^u u_{ii}+F^{u,u}u_i^2 ].
\end{eqnarray*}

The term involving $X_{ii}$ and the terms involving $u_i, u_{ii}$ ($i\in B$) can be controlled by $W_{ii}$ and $\nabla W_{ii}$ using the Weingarten formula and Lemma \ref{hessian of u}. We obtain
\begin{eqnarray*}\label{new3-1-nn-f}
&& \sum F^{\alpha\beta}\phi_{\alpha\beta}-\phi_t=O(\phi +\sum_{i,j\in
B}|\nabla W_{ij}|)-\frac{1}{\sigma_{1}(B)} \sum_{\alpha,\beta}
\sum_{i,j\in
B,i\neq j}F^{\alpha\beta}W_{ij\alpha}W_{ij\beta}\nonumber \\
&& \quad -\frac{1}{\sigma_{1}^{3}(B)}\sum_{\alpha,\beta}\sum_{i\in
B} F^{\alpha\beta} (W_{ii\alpha}\sigma_{1}(B)-W_{ii}\sum_{j\in
B}v_{jj\alpha}) (W_{ii\beta}\sigma_{1}(B)-W_{ii}\sum_{j\in
B}v_{jj\beta})\nonumber \\
&& \quad  -\sum_{i\in B}[\sigma_l(G)+\frac{\sigma_{1}^{2}(B|i)-
\sigma_{2}(B|i)}{\sigma_{1}^{2}(B)}]
[\sum_{\alpha,\beta,\gamma,\eta\in
G}F^{\alpha\beta,\gamma\eta}(\Lambda)W_{i\alpha\beta}W_{i\gamma\eta}
\nonumber
\\
&& \quad + 2\sum_{\alpha\beta\in G}F^{\alpha\beta} \sum_{j\in
G}\frac{1}{\lambda_{j}}W_{ij\alpha}W_{ij\beta}+
2\sum_{\alpha,\beta\in
G}\sum_{\gamma=1}^{n+1}F^{\alpha\beta,X^{\gamma}}W_{i\alpha\beta
}X^{\gamma}_i +\sum_{\gamma, \eta=1}^{n+1}
F^{X^{\gamma},X^{\eta}}X^{\gamma}_iX^{\eta}_i].
\end{eqnarray*}
The analysis in the proof of Theorem 3.2 in \cite{BG} can be used to show the right hand side of
above inequality can be controlled by $\phi+ |\nabla
\phi|-C\sum_{i,j\in B}|\nabla W_{ij}|$. Inequality (\ref{flow3}) is verified. The first part of theorem follows from the standard strong maximum principle for parabolic equations.

For the second part of theorem, as in \cite{BG}, we may
approximate $M_0$ by a strictly convex $M^{\epsilon}_0$. By
continuity, there is $\delta>0$ (independent of $\epsilon$), such
that there is a solution $M^{\epsilon}(t)$ to (\ref{flow1}) with
$M^{\epsilon}(0)= M^{\epsilon}_0$ for $t\in [0,\delta]$. We argue
that $M^{\epsilon}(t)$ is strictly convex for $t\in [0,\delta]$.
If not, there is $t_0>0$ so that $M^{\epsilon}(t)$ is strictly
convex for $0\le t<t_0$, but there is one point $x_0$ such that
$(h_{ij}(x_0,t_0))$ is not of full rank. This contradicts
the first part of theorem. Taking $\epsilon \to 0$, we conclude
that $M(t)$ is convex for all $t\in [0,\delta]$. This implies that
the set $t$ where $M(t)$ is convex is open. It is obviously
closed. Therefore, $M(t)$ is convex for all $t\in [0,T]$. Again,
by first part of theorem, $M(t)$ is strictly convex for all
$t\in (0,T]$. \qed

\medskip

In the case of flow (\ref{umcf}), $F(g^{-1}h, X,\nu, u, t)=u\sigma_1(g^{-1}h)-n$, it is clear $F$ satisfies conditions in Theorem \ref{thmw2-flow-1}. 
\begin{proof} {\bf( Proof of Corollary \ref{cor 1}.)}
  The proof follows from our main theorem, the monotonicity of the quermassintegrals (Proposition \ref{prop mono quermass}) and Theorem \ref{thmw2-flow-1}.
\end{proof}

\end{document}